\theoremstyle{plain}
\newtheorem{thm}{Theorem}[section]
\newtheorem{prop}[thm]{Proposition}
\theoremstyle{definition}
\newtheorem{assumption}[thm]{Assumption}
\theoremstyle{remark}
\newtheorem{rmk}[thm]{Remark}
\providecommand{\diff}{\mathrm{d}}
\providecommand{\Oh}{\mathrm{O}}
\newcommand{\E}{\mathbb{E}}
\newcommand{\R}{\mathbb{R}}
\DeclareMathOperator{\diam}{diam}
\icmltitlerunning{Quantifying the noise sensitivity of the Wasserstein metric for images}
\begin{document}

\twocolumn[
  \icmltitle{Quantifying the noise sensitivity of the Wasserstein metric for images}

  \begin{icmlauthorlist}
    \icmlauthor{Erik Lager}{tau}
    \icmlauthor{Gilles Mordant}{yale}
    \icmlauthor{Amit Moscovich}{tau}
  \end{icmlauthorlist}

  \icmlaffiliation{tau}{Department of Statistics and Operations Research, Tel Aviv University, Tel Aviv, Israel}
  
  \icmlaffiliation{yale}{Applied and Computational Mathematics, Yale University, New Haven, USA}

  \icmlcorrespondingauthor{Erik Lager}{erikice1@gmail.com}
  \icmlcorrespondingauthor{Gilles Mordant}{gilles.mordant@yale.edu}
  \icmlcorrespondingauthor{Amit Moscovich}{mosco@tauex.tau.ac.il}

  \icmlkeywords{Wasserstein metric, Machine Learning, ICML}

  \vskip 0.3in
]

\printAffiliationsAndNotice

\begin{abstract}
Wasserstein metrics are increasingly adopted as similarity scores for images.
We consider the sensitivity of Wasserstein metrics with respect to pixel-wise additive noise when the images are treated as discrete measures on the pixel grid.
We derive finite-sample expectation bounds for a Gaussian noise model.
Among other results, we prove that the error in the signed 2-Wasserstein discrepancy scales with the square root of the noise standard deviation. This is favorable compared to the Euclidean metric that scales linearly, and thus provides a theoretical basis for the benefits of optimal transport distances in noisy settings. We present experiments that support our theoretical findings
and point to a peculiar phenomenon where increasing the level of noise can decrease the Wasserstein distance. 
A case study on cryo-electron microscopy images demonstrates that the Wasserstein metric can capture the geometry of the data manifold in high noise settings even when the Euclidean metric fails.
\end{abstract}

\section{Introduction}

Optimal Transport (OT) provides a principled way to measure the distance between probability measures, capturing not only pointwise differences but also the underlying geometry of the data. Recent advances in computational approximation methods \citep{Cuturi2013,Schmitzer2019} contributed greatly to the rising popularity of optimal transport across many domains, such as computer vision \citep{FeydyRoussillonTrouveGori2019}, domain adaptation \citep{CourtyFlamaryTuiaRakotomamonjy2017}, and others.
In imaging applications, the Wasserstein metric can be used to measure similarity by treating images as discrete measures on a grid, assigning a point mass to every pixel, proportional to its value.
One field where this approach is gaining popularity is in single-particle cryo-electron microscopy (cryo-EM), a domain characterized by extremely high noise levels, where OT-based methods have been successfully applied to fundamental tasks, including the alignment of 3D density maps \citep{RiahiWoollardPoitevinCondonDuc2023,SingerYang2024}, the clustering of 2D tomographic projections \citep{RaoMoscovichSinger2020}, and the rotational alignment of tomographic projections with heterogeneity \citep{ShiSingerVerbeke2025}.
We believe that a driver for this adoption is that, empirically, the Wasserstein metric appears more robust to noise than the standard Euclidean norm.

\paragraph{Related work.}
In generative modeling, OT-based metrics have inspired methods such as Wasserstein GAN \citep{ArjovskyChintalaBottou2017}, Wasserstein autoencoders \citep{TolstikhinBousquetGellySchoelkopf2017} and flow matching \citep{LipmanChenBen-HamuNickelLe2022, AlbergoVanden-Eijnden2022, LiuGongLiu2022}. The latter in particular has strong connections to OT in its dynamic formulation. Building upon this, recent variants of flow matching incorporate OT solvers into the training process \citep{TongFatrasMalkinHuguetZhangEtAl2024, ChemseddineHagemannSteidlWald2025, ZhangMousavi-HosseiniKleinCuturi2025, Mousavi-HosseiniZhangKleinCuturi2025}.
While our work does not target these models specifically, we believe that a better understanding of the noise robustness of optimal transport procedures is needed to understand why modern generative models work so well. We discuss alternative models to ours in the last paragraph of Section~\ref{subsec:OTexamples}.

Many variants of OT such as partial optimal transport \citep{ChapelAlayaGasso2020, RaghvendraShirzadianZhang2024a} 
and unbalanced optimal transport \citep{BenamouCarlierCuturiNennaPeyre2015, ChizatPeyreSchmitzerVialard2018} have been proposed to address mass imbalance. Our work can be extended in that direction, see Appendix~\ref{sec: Unbalanced}.

\paragraph{Our contribution.}
To the best of our knowledge, this is the first paper that studies the noise robustness of Wasserstein metrics for measures on a fixed grid\footnote{Following a request by an anonymous referee, we stress that the problem of interest in this paper differs from the setting of two point clouds where the location of the points is corrupted by additive noise.}. 
On the theoretical side, we provide quantitative bounds relating the signed Wasserstein cost (see equation~\eqref{eq: Mainini positive-negative split}) between noise-corrupted images and the signed Wasserstein cost between the clean images. 
Focusing on a Gaussian noise model with fixed mass and pixel-wise standard deviation proportional to $\sigma$, we show that the signed $p$-Wasserstein discrepancy  between a noise-corrupted $n \times n$ picture and its clean counterpart gives rise to an error term that scales like $(n \sigma)^{1/p}$, see \cref{thm: WavBound1Pic}.
For the 1-Wasserstein distance, considering a similar noise model, \cref{thm: WavBound} establishes that the distance between two noisy pictures deviates by at most order $ \sigma n \log_2 n $ from  the distance between the clean ones. \cref{thm: Wp bound between noisy images} gives a bound for the case of two different measures and $p\geq 1$. 
We complement our theoretical results with simulations in Section~\ref{sec:experiments}, showcasing the properties of the signed Wasserstein discrepancy in a variety of cases. 

\begin{figure*}
    \hspace{-12pt}\includegraphics[width=1.01\textwidth]{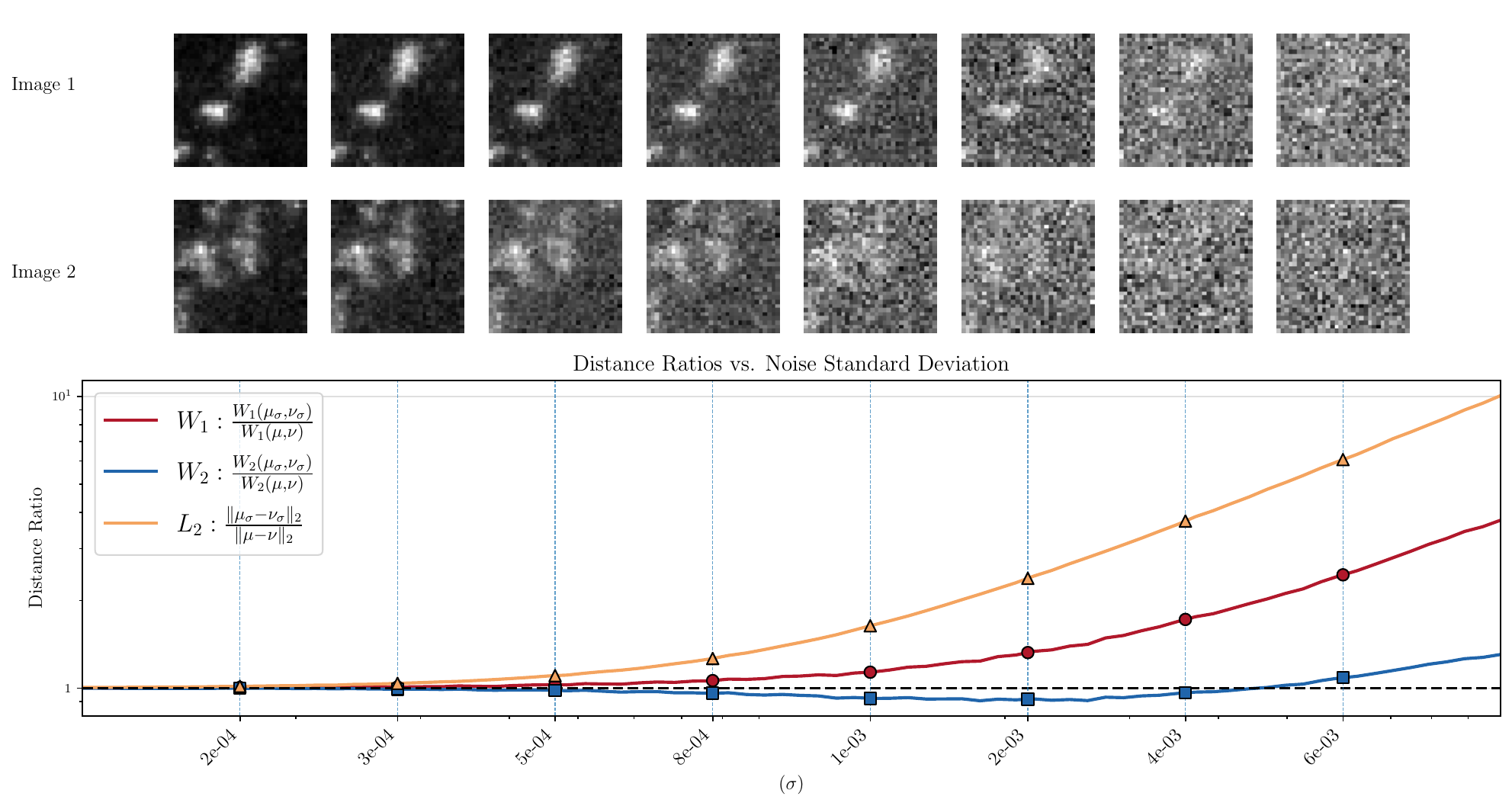}
    \caption{Distance ratios of $L^2, W_1$ and $W_2$ on a pair of noisy images as a function of the noise level. $L^2$ diverges first, followed by $W_1$ and lastly,  $W_2$ departs from the original distance between the images, exhibiting more noise robustness. Above each marker we show the pair of images that were compared using all 3 metrics. See Section~\ref{subsec: Visualizing Robustness of Inter-Image Distances} for more details.}
    \label{fig: two images - all distances ratios}
\end{figure*}

\section{Wasserstein metrics for signed and noise corrupted measures}
\label{subsec:OTexamples}

\paragraph{Wasserstein metric.} Consider two probability measures $\mu,\nu \in \mathcal{P}(\mathcal{X})$. For any $1 \le p \in \mathbb{N}$ and given a ground cost $\mathsf{d}: \mathcal{X} \times \mathcal{X} \to \R_+$, the Wasserstein metric between $\mu$ and $\nu$ is defined as
\begin{align}
    W_p(\mu,\nu) := \left( \inf_{\pi \in \Gamma(\mu,\nu)} \int_{\mathcal{X} \times \mathcal{X}} \mathsf{d}(x,y)^p \diff \pi(x,y) \right)^{\tfrac{1}{p}}
\end{align}
where $\Gamma(\mu,\nu)$ is the set of measures with respective marginals $\mu$ and $\nu$.
This definition extends to non-probability measures as long as they have equal total mass.

Under mild conditions, 
the Wasserstein metric admits a dual formulation, \citep[Theorem~1.39]{Santambrogio2015} 
\begin{align}
 W_p^p(\mu,\nu) = \sup_{f \in L^1(\mu) } \int_\mathcal{X} f(x) \diff \mu(x) + \int_\mathcal{X}  f^{\mathsf{d}_p}(y) \diff \nu(y),
\end{align}
where  \(f^{\mathsf{d}_p}(y): = \inf_{x\in \mathcal{X}} \big( \mathsf{d}(x,y)^p - f(x)\big).\)

In the case of the 1-Wasserstein metric, the dual formulation further admits the simplified form
\begin{equation}
\label{eq: DualW1}
W_1(\mu,\nu) = \sup_{f \in \operatorname{Lip}_1(\mathcal{X})} \langle f, \mu -\nu\rangle,
\end{equation}
where $\operatorname{Lip}_1(\mathcal{X})$ is the set of 1-Lipschitz functions with respect to $\mathsf{d}$ on $\mathcal{X}$. The dual formulations are particularly useful to study stability of optimal transport with respect to perturbations of the marginals $\mu$ and $\nu$.

\paragraph{Extension to signed measures.}
\label{subsection: Case of negative measures}
In this section we explain how the definition of Wasserstein metrics can be extended to signed measures.
This is necessary for two reasons: first, some image modalities (such as cryo-EM) naturally involve negative pixels. Second,
even when all pixels are non-negative, once noise is introduced, e.g. pixel-wise i.i.d. Gaussian noise, negative values may appear.
Since we identify pixel values with point masses, this means we must account for signed measures.

Let $\mu, \nu$ be two signed measures with Jordan decompositions $\mu = \mu_+ - \mu_-$ and $\nu = \nu_+ - \nu_-$.
\citet{Mainini2012} considered the two (positive) measures,
\begin{align}
    \label{eq: Mainini positive-negative split}
    S_{\mu,\nu} &= \mu_+ + \nu_-
    \quad \text{and} \quad
    T_{\mu,\nu} = \nu_+ + \mu_-,
\end{align}
and defined a \emph{signed Wasserstein discrepancy}  between $\mu, \nu$,
\begin{align} \label{def:signed_Wasserstein}
    W^\pm_p (\mu, \nu)
    :=
    W_p(S_{\mu,\nu}, T_{\mu,\nu}).
\end{align}
Note that if the masses of the signed measures $\mu$ and $\nu$ are equal, then the positive measures $ S_{\mu,\nu}, T_{\mu,\nu} $ have equal masses. Thus, $W_p(S_{\mu,\nu}, T_{\mu,\nu})$ is well defined.
The signed Wasserstein discrepancy \( W^\pm_p \) is a metric when $p=1$ but not when $p>1$, since it does not satisfy the triangle inequality \citep[Proposition 3.4]{Mainini2012}.
See Figure~\ref{fig: Ratios for different kinds of images} for a surprising consequence of this.

For other applications and variants of signed Wasserstein discrepancies, see  \citet{GroppeNiemollerHundrieserVentzkeBlobEtAl2025,EngquistFroeseYang2016,ThorpeParkKolouriRohdeSlepcev2017}.

\paragraph{The issue of noise.}
We model images as real-valued signals on a square grid $G_n$ of $n^2$ pixels which we identify with signed discrete measures. 
The aim of this work is to investigate how $W^\pm_p$ behaves when the images/measures $\mu$ and $\nu$ are corrupted by noise. In particular, consider observing
\begin{align}
    \mu_\varepsilon
    :=
    \mu + \varepsilon_\mu 
    \quad \text{ and } \quad
    \nu_\varepsilon
    :=
    \nu +  \varepsilon_\nu,
\end{align}
and let
\begin{align}
    S_{\mu_\varepsilon, \nu_\varepsilon} &:= (\mu_\varepsilon)_+ + (\nu_\varepsilon)_-
    \\
    T_{\mu_\varepsilon, \nu_\varepsilon} &:= (\nu_\varepsilon)_+ + (\mu_\varepsilon)_-\ .
\end{align}
Further set 
\begin{align}
\label{eq: MasStd}
    C_S &:= \sum_{x \in G_n} \big( (\mu_\varepsilon)_+(x) + (\nu_\varepsilon)_-(x) \big) \\
    C_T &:= \sum_{x \in G_n} \big( (\nu_\varepsilon)_+(x) + (\mu_\varepsilon)_-(x) \big). 
\end{align}

Normalizing $S_{\mu_\varepsilon, \nu_\varepsilon}$ by $\ C_S$ and $\ T_{\mu_\varepsilon, \nu_\varepsilon}$ by $\ C_T$ is necessary to ensure that both measures have the same (unit) mass in the case where $\sum_{x \in G_n} \mu_\varepsilon(x) \neq \sum_{x \in G_n} \nu_\varepsilon(x)$. In the sequel, we will use the notation 
\begin{align}
    \label{eq: MeasSwitchResc}
    \bar{S}_{\mu_\varepsilon, \nu_\varepsilon}
    :=
    \frac{S_{\mu_\varepsilon, \nu_\varepsilon}}{C_S},
    \qquad
    \bar T_{\mu_\varepsilon, \nu_\varepsilon}
    :=
    \frac{T_{\mu_\varepsilon, \nu_\varepsilon}}{C_T}.
\end{align}

We aim at understanding the relationship between $W^\pm_p (\mu, \nu) $ and $ W_p ( \bar S_{\mu_\varepsilon, \nu_\varepsilon}, \bar T_{\mu_\varepsilon, \nu_\varepsilon})$.
To put our analysis into context, consider the standard squared $L^2$ distance, a common metric for image comparison. In the presence of additive Gaussian noise with variance $\sigma^2$, the expected squared $L^2$ distance between a signal and its noisy version has a simple, direct relationship: it is exactly $n^2\sigma^2$. This metric, however, is local and insensitive to the underlying geometric structure of the signal. In contrast, the (signed) Wasserstein cost is claimed to capture this geometry, but its behavior under noise is far more complex to characterize. This paper aims to bridge that gap by providing a theoretical and empirical analysis of its robustness.

\paragraph{Dyadic bound on the Wasserstein distance.}

To get sharp estimates on the Wasserstein distance, the following proposition is particularly useful. 
This is Proposition 1 of \cite{WeedBach2019}, but on a domain with an arbitrary diameter (their formulation assumed $\diam(S)=1$). 
The bound is based on the construction of a coupling at various scales, managing the mass imbalance in subdomains. This construction yields sharp rates in a variety of cases.
\begin{prop}
\label{prop: Multiscale}
    Let $\{\mathcal{Q}^k\}_{1 \le k \le k^*}$ be a dyadic partition of a set $S$ with parameter $\delta <1$. Then, for probability measures $\mu$ and $\nu$ supported on $S$,
    \begin{align}
    \label{eq:dyadic_upper_bound}
        \frac{W_p^p  (\mu, \nu)}{\diam(S)^p}
        \le
        \delta^{pk^*}
        +
        \sum_{k=1}^{k^*}
        \delta^{p(k-1)}
        \sum_{Q_i^k \in \mathcal{Q}^k}
        \lvert
            (\mu-\nu)(Q_i^k)
        \rvert. \nonumber
    \end{align}
\end{prop}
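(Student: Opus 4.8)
The plan is to produce an explicit coupling $\pi\in\Gamma(\mu,\nu)$ adapted to the hierarchy $\{\mathcal{Q}^k\}$ and to estimate $\int \mathsf{d}(x,y)^p\,\diff\pi(x,y)$ term by term; since $W_p^p(\mu,\nu)\le\int\mathsf{d}^p\,\diff\pi$, any such estimate is an upper bound. Throughout I set $\mathcal{Q}^0:=\{S\}$ and use the defining properties of a dyadic partition with parameter $\delta$: each $Q\in\mathcal{Q}^k$ lies in a unique cell of $\mathcal{Q}^{k-1}$, and $\diam(Q)\le\delta^k\diam(S)$.

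I would build $\pi$ in two phases. \emph{Matching phase:} inside each finest cell $Q\in\mathcal{Q}^{k^*}$, couple the common mass $\mu(Q)\wedge\nu(Q)$ by a normalized product measure on $Q\times Q$; as both endpoints lie in $Q$ this costs at most $(\mu(Q)\wedge\nu(Q))\,\diam(Q)^p$, hence at most $\diam(S)^p\delta^{pk^*}$ in total, which is the leading term of \eqref{eq:dyadic_upper_bound}. What is left in each finest cell is a \emph{residual}: a source of mass $(\mu(Q)-\nu(Q))_+$ or a sink of mass $(\nu(Q)-\mu(Q))_+$, i.e.\ a signed surplus $\mu(Q)-\nu(Q)$ localized in $Q$. \emph{Aggregation phase:} for $k=k^*,k^*-1,\dots,1$, inside each cell $R\in\mathcal{Q}^{k-1}$ match as much residual source as possible against residual sink---again by a product-type coupling supported in $R\times R$---and pass the unmatched remainder, now attached to $R$, down to the coarser level. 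The telescoping identity $\sum_{Q\subset R,\,Q\in\mathcal{Q}^k}(\mu(Q)-\nu(Q))=\mu(R)-\nu(R)$ shows that this phase preserves the invariant ``the residual attached to any cell $R$ has signed mass $\mu(R)-\nu(R)$''; hence at $\mathcal{Q}^0=\{S\}$ the residual vanishes and the accumulated $\pi$ has marginals exactly $\mu$ and $\nu$.

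It then remains to add up the cost of the aggregation phase. At level $k$ all mass is moved inside cells $R\in\mathcal{Q}^{k-1}$, so each transported unit costs at most $\diam(R)^p\le\diam(S)^p\delta^{p(k-1)}$; and the mass moved at level $k$ is at most the residual mass present when the level begins, which by the invariant equals $\sum_{Q\in\mathcal{Q}^k}|\mu(Q)-\nu(Q)|$ (inside each $R$ the amount matched is the minimum of its total source and total sink, hence at most their sum). Summing $\diam(S)^p\delta^{p(k-1)}\sum_{Q\in\mathcal{Q}^k}|\mu(Q)-\nu(Q)|$ over $k=1,\dots,k^*$ and adjoining the leading term from the matching phase gives precisely the right-hand side of \eqref{eq:dyadic_upper_bound}.

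The product-coupling estimates and the summation are routine. The step that takes real care is the bookkeeping of the aggregation phase: defining the residual measures unambiguously, treating degenerate cells ($\mu(Q)=0$ or $\nu(Q)=0$) without $0/0$ issues, verifying that the sum of all the elementary couplings is a genuine element of $\Gamma(\mu,\nu)$, and confirming the telescoping identity that keeps the per-cell residual equal to $\mu(R)-\nu(R)$. This is exactly the content of the Weed--Bach construction; passing to an arbitrary diameter only multiplies every ground distance by $\diam(S)$, which is why the factor $\diam(S)^p$ reappears unchanged.
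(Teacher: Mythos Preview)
Your proposal is correct and is precisely the Weed--Bach multiscale coupling that the paper relies on: the proposition is stated without proof in the paper and simply cited from \cite{WeedBach2019}, and your matching-then-aggregation construction, together with the telescoping invariant on the per-cell residual, is exactly their argument. The passage to arbitrary $\diam(S)$ is, as you say, just a uniform rescaling of the ground metric.
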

Recall that a dyadic partition of a set $S$ with parameter $\delta <1$ is a sequence 
$\{\mathcal{Q}^k\}_{1 \le k \le k^*}$ possessing the following properties. First,  
the sets in $\mathcal{Q}^k$ form a partition of $S$. Further, 
if $Q\in \mathcal{Q}^k$, then $\diam(Q) \le \delta^k$. 
Finally, if $Q^{k+1}\in \mathcal{Q}^{k+1}$ and $Q^{k}\in \mathcal{Q}^{k}$, then either 
$
Q^{k+1} \subset Q^{k} 
$ or $Q^{k+1} \cap Q^{k} = \emptyset$.

\section{Theoretical contributions}

Our main theoretical results are upper bounds in expectation on the effect that the noise has on the signed Wasserstein cost between images.
To avoid boundary effects and simplify some of our analyses, we consider the pixel grid to have cyclic boundary conditions, i.e., the left–right and top–bottom edges wrap. With this choice, each pixel has the same number of neighbors.
While the Wasserstein metric naturally extends to non-probability measures, it still requires that both measures have the same mass, as described in the previous subsection. A standard i.i.d.\ noise model comes with the need of rescaling the pictures, which we study in the following section.

\emph{Proofs of the following results are collected in Appendix~\ref{app: DefProofs}.}

\subsection{The impact of rescaling.}\label{subsec: Generalization to non-additive noise}
An important fact is that the signed Wasserstein discrepancy, by construction, has an intricate non-linear behavior in terms of the noise when the mass of the latter is not fixed.
By duality, observe that 
\begin{align}
    &W_p^p ( \bar S_{\mu_\varepsilon,\nu_\varepsilon} , \bar T_{\mu_\varepsilon,\nu_\varepsilon})
    = 
    \sup_{f} \langle f, \bar S_{\mu_\varepsilon,\nu_\varepsilon}  \rangle + \langle f^{\mathsf{d}_p}, \bar T_{\mu_\varepsilon,\nu_\varepsilon}  \rangle \nonumber
    \\
    &=
    \sup_{f} \frac{\left\langle f, S_{\mu_\varepsilon,\nu_\varepsilon}  \right\rangle + \left\langle f^{\mathsf{d}_p}, T_{\mu_\varepsilon,\nu_\varepsilon}\right \rangle}{\sum_{x \in G_n} S_{\mu_\varepsilon,\nu_\varepsilon}(x)} +
    \\
    &+\left(  \tfrac1{\sum_{x \in G_n} T_{\mu_\varepsilon,\nu_\varepsilon}(x)} -\tfrac1{\sum_{x \in G_n} S_{\mu_\varepsilon,\nu_\varepsilon}(x)} \right) \left\langle f^{\mathsf{d}_p},  T_{\mu_\varepsilon,\nu_\varepsilon}\right\rangle. \nonumber
\end{align}
This decomposition shows that the optimal dual function must balance two objectives at the same time: the first one is the transport problem, and the second can be interpreted as a mass imbalance penalization. 
In the case of i.i.d.\ Gaussian noise, the result above can be refined to yield,
\begin{thm}
\label{thm: noiseImpact}
Consider two $n \times n$ images $\mu$ and $\nu$.
  Assume that $\varepsilon_\mu,\varepsilon_\nu$ are $\mathcal{N}(0_{n^2}, \sigma^2I_{n^2})$. Recall the definition of $\bar S_{\mu_\varepsilon, \nu_\varepsilon}, \bar T_{\mu_\varepsilon, \nu_\varepsilon}$ in \eqref{eq: MeasSwitchResc}. Then, 
\begin{align}
    W_1
    &(
        \bar S_{\mu_\varepsilon,\nu_\varepsilon},
        \bar T_{\mu_\varepsilon, \nu_\varepsilon}
    )
    =
    \frac{1}{\sum_{x\in G_n} S_{\mu_\varepsilon, \nu_\varepsilon}(x)}
    \\
    &\times \sup_{f \in \operatorname{Lip}_1} \left\langle f, S_{\mu_\varepsilon, \nu_\varepsilon} - T_{\mu_\varepsilon, \nu_\varepsilon} \left(1 + \Oh_p\left(1/n \right)\right) \right\rangle. \nonumber
\end{align}
\end{thm}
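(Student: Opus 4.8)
The plan is to start from the dual decomposition displayed immediately before the statement, specialize it to $p=1$, and reduce the whole claim to a single scalar estimate on the ratio of the two normalizing masses. For $p=1$ the $\mathsf d$-transform of any competitor $f\in\operatorname{Lip}_1$ may be taken to be $-f$, so that decomposition collapses, after putting the two summands over a common denominator, to
\[
W_1^\pm(\bar S_{\mu_\eps,\nu_\eps},\bar T_{\mu_\eps,\nu_\eps})
=\frac{1}{C_S}\,\sup_{f\in\operatorname{Lip}_1}\Big\langle f,\;S_{\mu_\eps,\nu_\eps}-\tfrac{C_S}{C_T}\,T_{\mu_\eps,\nu_\eps}\Big\rangle ,
\]
with $C_S,C_T$ as in \eqref{eq: MasStd}. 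Since the random scalar $C_S/C_T$ does not depend on $f$, the theorem is exactly the statement $C_S/C_T = 1+\Oh_p(\sigma/n)$, i.e.\ $|C_S-C_T|/C_T=\Oh_p(\sigma/n)$.

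For the numerator, telescoping positive and negative parts gives the exact identity $C_S-C_T=\sum_{x\in G_n}\mu_\eps(x)-\sum_{x\in G_n}\nu_\eps(x)$; because the clean masses coincide (this is the standing assumption under which $W^\pm$ is defined), the deterministic part cancels and $C_S-C_T=\sum_{x\in G_n}(\eps_\mu(x)-\eps_\nu(x))\sim\mathcal N(0,2n^2\sigma^2)$. A Gaussian tail bound (or even Chebyshev) then yields $|C_S-C_T|=\Oh_p(n\sigma)$.

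For the denominator, use $(\cdot)_++(\cdot)_-=|\cdot|$ to write $C_T=\tfrac12\big(\|\mu_\eps\|_1+\|\nu_\eps\|_1\big)-\tfrac12(C_S-C_T)$. The functional $\eps\mapsto\|\mu+\eps\|_1$ is $n$-Lipschitz for the Euclidean norm on $\R^{n^2}$ (since $\|\cdot\|_1\le n\|\cdot\|_2$), so Gaussian concentration gives $\|\mu_\eps\|_1=\E\|\mu_\eps\|_1+\Oh_p(n\sigma)$ and likewise for $\nu$; moreover $\E\|\mu_\eps\|_1=\sum_{x}\E|\mu(x)+\eps_\mu(x)|\ge c\,n^2$, where the non-degeneracy hypothesis ($\ge\lambda n^2$ non-negligible pixels) is used precisely here to force the constant $c>0$ to depend only on $\lambda$ and the bulk magnitude of the clean pixels. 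Combining, $C_T\ge c\,n^2-\Oh_p(n\sigma)$, so $C_T\gtrsim n^2$ with probability tending to one, whence $|C_S-C_T|/C_T=\Oh_p(n\sigma)/\Omega_p(n^2)=\Oh_p(\sigma/n)$, completing the argument.

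The main obstacle is the lower bound $C_T\gtrsim n^2$: all of the $\sigma/n$ sharpness (rather than merely $1/n$) comes from the normalizing mass scaling like $n^2$ instead of $n^2\sigma$. If the signal were allowed to be arbitrarily faint one would only obtain $\E\|\mu_\eps\|_1\gtrsim n^2\sigma$ and hence the weaker conclusion $C_S/C_T=1+\Oh_p(1/n)$; isolating the factor $\sigma$ genuinely requires that a constant fraction of pixels is of order one, and the bookkeeping of how the implicit $\Oh_p$ constant depends on $\mu,\nu,\lambda$ (and on the noise-to-signal regime) is the delicate part. The remaining ingredients — Kantorovich–Rubinstein duality, the telescoping identity $C_S-C_T=\sum\mu_\eps-\sum\nu_\eps$, and Gaussian concentration of a Lipschitz functional — are routine.
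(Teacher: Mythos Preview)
Your proposal is correct and follows essentially the same route as the paper: Kantorovich--Rubinstein duality to reduce to the scalar ratio $C_S/C_T$, the telescoping identity $C_S-C_T=\sum_x(\eps_\mu-\eps_\nu)\sim\mathcal N(0,2n^2\sigma^2)$, and the order-$n^2$ bound on the denominator. If anything, you supply more detail than the paper does for the denominator step---the paper simply asserts $\sum_x T_{\mu_\eps,\nu_\eps}=\Oh_p(N^2)$ ``owing to our assumption on the signals,'' whereas you spell out the Gaussian concentration of $\|\mu_\eps\|_1$ and correctly flag that the $\lambda n^2$ nonzero-pixel hypothesis tacitly requires those pixels to be of order one for the constant to be $\sigma$-free.
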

Even though the above result does not seem symmetric,  we establish in the proof that 
\begin{align}
    \sum_{x \in G_n} S_{\mu_\varepsilon,\nu_\varepsilon}(x) - T_{\mu_\varepsilon,\nu_\varepsilon}(x) = \Oh_p( \sigma n), 
\end{align}
from which we deduce that the apparent absence of symmetry is merely an artifact of the proof.

In general, one can hope that the ratio $\sigma/n$ is small, so that the   result suggests that understanding the quantity
$\sup_{f} \left\langle f, S_{\mu_\varepsilon,\nu_\varepsilon}  \right\rangle + \left\langle f^\mathsf{d}, T_{\mu_\varepsilon,\nu_\varepsilon}\right \rangle$ under a suitable choice of noise is a first step to take towards completely characterizing the impact of the noise.

Without rescaling, a natural idea is to rely on unbalanced transport metrics between $\mu_+ + \nu_-$ and $\nu_+ + \mu_-$. In that case, we can derive similar bounds as those in the paper, which we defer to Appendix~\ref{sec: Unbalanced}. Using unbalanced transport metrics usually comes with the need to choose additional parameters. Further, 
a complete study of a pseudo-metric relying on decomposition into positive and negative parts followed by unbalanced OT hasn't been carried out before; there is no such theory like that of \citet{Mainini2012}.

\subsection{Noise model}

The previous section invites us to consider a noise model for which it is not necessary to rescale the measures. To this end, we will consider slightly correlated Gaussian noise where we identify each coordinate of the Gaussian noise vector with a point on the regular grid $G_n$.
\begin{assumption}
\label{assum: Noise}
Consider an image modeled as an $n \times n$ grid of pixels and set $m= n^2$. Assume that the noise vector $N = (N_1, \ldots, N_m)$ is drawn from a multivariate normal distribution $\mathcal{N}(0, \Sigma)$, where the covariance matrix $\Sigma$ is an $m \times m$ matrix defined as
\begin{align}
\Sigma_{ij} =
\begin{cases}
    \sigma^2 & \text{if } i=j \\
    -\frac{\sigma^2}{m-1} & \text{if } i \ne j.
\end{cases}
\end{align}
\end{assumption}
Note that this noise model is equivalent to drawing the pixels independently from $\mathcal{N} \left( 0, \sigma^2 m/(m-1) \right)$, calculating their mean, and then subtracting the mean from every pixel.

\begin{prop}[Noise model properties]
\label{prop: Noise model properties}
Under Assumption~\ref{assum: Noise}, the following holds.
\begin{enumerate}
    \item The marginal pixel distribution is $N_i \sim \mathcal{N}(0, \sigma^2)$.
    \item The sum of pixels is zero : $\sum_{i=1}^m N_i = 0$.
\end{enumerate}
\end{prop}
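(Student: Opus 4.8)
The plan is to obtain both properties directly from standard facts about multivariate normal distributions, supplemented by one elementary computation of the total variance. For the first claim, I would simply recall that if $N \sim \mathcal{N}(0,\Sigma)$ then every coordinate is marginally $N_i \sim \mathcal{N}(0,\Sigma_{ii})$; since $\Sigma_{ii} = \sigma^2$ by Assumption~\ref{assum: Noise}, this immediately gives $N_i \sim \mathcal{N}(0,\sigma^2)$ with no further work.

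For the second claim, set $S := \sum_{i=1}^m N_i = \langle \1, N\rangle$, where $\1 \in \R^m$ is the all-ones vector. Being a linear functional of a Gaussian vector, $S$ is itself a (possibly degenerate) Gaussian random variable with mean $\langle \1, 0\rangle = 0$ and variance $\1^\top \Sigma \1$. The key step is the computation
\[
\1^\top \Sigma \1 = \sum_{i,j} \Sigma_{ij} = \sum_{i=1}^m \Sigma_{ii} + \sum_{i \ne j} \Sigma_{ij} = m\sigma^2 + m(m-1)\left(-\frac{\sigma^2}{m-1}\right) = m\sigma^2 - m\sigma^2 = 0,
\]
using that there are $m(m-1)$ ordered off-diagonal index pairs. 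A Gaussian random variable with zero variance equals its mean almost surely, hence $S = 0$ almost surely.

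I do not anticipate a genuine obstacle; the one point deserving a line of care is the passage from $\Var(S) = 0$ to $S = 0$ a.s. This can be justified either via the characteristic function $t \mapsto \exp\!\big(it\,\E S - \tfrac12 t^2 \Var S\big) = 1$, which is that of the Dirac mass at $0$, or simply by noting $\E[S^2] = \Var(S) = 0$ so that $S = 0$ in $L^2$ and therefore almost surely. It may also be worth remarking in passing that the specific off-diagonal value $-\sigma^2/(m-1)$ in Assumption~\ref{assum: Noise} is precisely the unique choice that makes every row sum of $\Sigma$ vanish while keeping all marginal variances equal to $\sigma^2$, which is exactly what is needed for the fixed-mass property exploited throughout the paper.
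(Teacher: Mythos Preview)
Your proposal is correct and follows essentially the same approach as the paper: both read off the marginal law from the diagonal of $\Sigma$, and both conclude $\sum_i N_i = 0$ by computing $\Var(\sum_i N_i) = \sum_{i,j}\Sigma_{ij} = m\sigma^2 - m\sigma^2 = 0$ and invoking that a zero-variance random variable equals its mean almost surely. Your additional remarks on justifying the degenerate-Gaussian step and on why the off-diagonal value is exactly $-\sigma^2/(m-1)$ are nice but not required.
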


This last property allows us to focus on the impact of the noise, while setting aside the questions pertaining to rescaling the measures whose behavior was captured in \cref{thm: noiseImpact}.

\subsection{\texorpdfstring{Multiscale $W_p$}{Wp} bound on a single image}\label{subsec: Multiscale Wp on a single image}

We shall begin by proving bounds in the particular case where we compare one image with a noise corrupted version of itself. We start with the case of $p=1$. 

\paragraph{Proof sketch for upper bounds.} Our derivation relies on a multiscale argument using a dyadic partition of the pixel grid. We construct a suboptimal coupling by recursively matching mass imbalance across four quadrants at each scale. By summing the expected costs across all $\log_2 n$ levels of the partition, we obtain sharp bounds.

\begin{thm}\label{thm: W1 between a measure and its noisy counterpart}
Let $\mu : G_n \to [0,1]$ be a probability measure on the $n\times n$ unit grid $G_n$ with cyclic boundary conditions. Let $\varepsilon_1, \varepsilon_2$ be independent random signed measures on the grid that satisfy Assumption~\ref{assum: Noise} and for convenience assume that $n=2^\eta$. Then
\begin{align}
    \frac{n \sigma}{\sqrt{ \pi}}
    \le
    \mathbb{E} W^{\pm}_1(\mu+\varepsilon_1,\mu+\varepsilon_2)
    \le
    \frac{2\sqrt{2} \log_2 n + 1/\sqrt{2} }{\sqrt{\pi}}  n \sigma .
\end{align}
\end{thm}

\paragraph{Proof sketch for $W_1$ lower bound.}
We construct a potential function $f(x)$ that takes values $\pm \frac{1}{2n}$ depending on the sign of the noise at each pixel. This is a 1-Lipschitz function. Hence, we may use the Kantorovich-Rubinstein dual formulation to obtain a lower bound in expectation over the noise.

It is further possible to prove a result for $p > 1$. The rates differ substantially, as is clear from the following theorem. 

\begin{thm}
\label{thm: WavBound1Pic}
    Let $\mu:G_n \to [0,1]$ be a probability measure on the $n \times n $ unit grid $G_n$. Let $\varepsilon_1, \varepsilon_2$ be independent random signed measures on the grid that satisfy Assumption~\ref{assum: Noise}. For convenience, we again assume that $n=2^\eta$, for $\eta \in \mathbb{N}$.
    Then, for $p>1$ with $p \in \mathbb{N}$,
    \begin{align}
       \E \left[ \left( W_p^\pm( \mu+\varepsilon_1, \mu + \varepsilon_2  )\right)^p \right]
       \le  
        \frac{4\sqrt{2}}{\sqrt{\pi}} n\sigma .
    \end{align}
    Therefore, by Jensen's inequality, and the lower bound part of the proof:
    \begin{align}
        C_p n^{\frac{2}{p}-1} \sigma^{\frac{1}{p}}
        \le
        \E   W_p^\pm( \mu+\varepsilon_1, \mu+\varepsilon_2 )
        \le  
        \bigg(\frac{4\sqrt{2}}{\sqrt{\pi}} n\sigma \bigg)^{\frac{1}{p}},
    \end{align}
    where
    \begin{align}
        C_p
        :=
        \frac{2^{\frac{1}{p}-1}}{\sqrt{\pi}} \Gamma\Big(\tfrac{1}{2p} + \tfrac{1}{2}\Big).
    \end{align}
\end{thm}

\begin{rmk}
In both theorems above, the upper bound can be improved by removing the factor $\sqrt{2}$ if only one image is corrupted by noise. 
\end{rmk}

\subsection{\texorpdfstring{Multiscale $W_p$}{Wp} bound on two noisy images}\label{subsec: Multiscale Wp on two images}
We now consider the practically relevant setting where two different images are each corrupted by independent noise. 
Throughout, we assume that the noise model follows Assumption~\ref{assum: Noise} and assume that both images have unit mass. 
Our object of interest is thus
\begin{equation}
W_p^\pm(\mu+\varepsilon_\mu,\nu+\varepsilon_\nu).
\end{equation}

In the case $p=1$, one obtains the following result.

\begin{thm}
        \label{thm: WavBound}
        Let $\mu, \nu:G_n \to [0,1]$ be two probability measures on the \(n \times n\) unit grid $G_n$ with cyclic boundary conditions and let
        \( \varepsilon_\mu, \varepsilon_\nu : G_n \to \mathbb{R} \) be signed noise measures that satisfy Assumption~\ref{assum: Noise}.
        For convenience we assume that $n=2^\eta$, for $\eta \in \mathbb{N}$.
        Then 
    \begin{align}
            &\mathbb{E}
            \left[
                W^\pm_1(\mu + \varepsilon_\mu,  \nu + \varepsilon_\nu) - W^\pm_1(\mu, \nu)
            \right]
            \\&\qquad\le
            \tfrac{4n \log_2 n + n }{\sqrt{\pi}} \sigma
    +
    \tfrac{\sqrt{2}}{n}. \nonumber
    \end{align}
\end{thm}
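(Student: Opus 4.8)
The plan is to reduce the two-image statement to the single-image bounds of Theorem~\ref{thm: W1 between a measure and its noisy counterpart} by exploiting the metric property of $W_1^\pm$ for $p=1$. Since $W_1^\pm$ is a genuine metric on signed measures of equal mass (as noted after \eqref{eq: Mainini positive-negative split}, following \citet[Prop.~3.4]{Mainini2012}), the triangle inequality gives
\begin{align}
W_1^\pm(\mu+\eps_\mu,\nu+\eps_\nu)
\le
W_1^\pm(\mu+\eps_\mu,\mu)
+
W_1^\pm(\mu,\nu)
+
W_1^\pm(\nu,\nu+\eps_\nu).
\end{align}
Rearranging, $W_1^\pm(\mu+\eps_\mu,\nu+\eps_\nu)-W_1^\pm(\mu,\nu)\le W_1^\pm(\mu+\eps_\mu,\mu)+W_1^\pm(\nu+\eps_\nu,\nu)$, so taking expectations and applying Theorem~\ref{thm: W1 between a measure and its noisy counterpart} to each of the two terms (with $\eps_\mu$ and $\eps_\nu$ respectively) would yield an upper bound of $2\big(\tfrac{2}{\sqrt\pi}\sigma n\log_2 n+\tfrac1{2\sqrt\pi}\sigma n\big)=\tfrac{4n\log_2 n+n}{\sqrt\pi}\sigma$. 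That already matches the leading term in the claimed bound, which is reassuring.

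The remaining discrepancy is the additive $\sqrt2/n$ term, which cannot come from the triangle-inequality argument alone; it must account for a subtlety I have so far glossed over. The likely source is a mass issue: the noise model of Assumption~\ref{assum: Noise} is exactly mass-preserving (Proposition~\ref{prop: Noise model properties}, item 2), so $\mu+\eps_\mu$ and $\nu+\eps_\nu$ both still have unit mass and no rescaling is needed; however, the single-image theorem is stated for $W_1^\pm(\mu,\mu+\eps)=W_1(\eps^+,\eps^-)$, and chaining through an intermediate measure requires that all the pairs appearing in the triangle inequality have matching mass. With Assumption~\ref{assum: Noise} this holds on the nose, so the $\sqrt2/n$ correction more plausibly arises from a slightly different route: rather than inserting $\mu$ and then $\nu$, one inserts a single intermediate measure and controls a residual coupling cost, or one accounts for the diameter-$\sqrt2$ geometry of the grid entering through a term like $\diam(G_n)/n$ when bounding a leftover transport of negligible mass. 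I would pin this down by writing the chain through $\nu$ as intermediate and tracking the one place where the positive/negative decompositions of the four measures fail to align perfectly, bounding that mismatch by transporting it across the grid at cost at most $\diam(G_n)=\sqrt2$ times a mass of order $1/n$.

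Concretely, the steps I would carry out are: (i) record that for $p=1$, $W_1^\pm$ is a metric on the space of signed measures on $G_n$ with a common total mass, and that under Assumption~\ref{assum: Noise} all of $\mu,\nu,\mu+\eps_\mu,\nu+\eps_\nu$ share unit mass; (ii) apply the triangle inequality to split off the two noise contributions; (iii) invoke Theorem~\ref{thm: W1 between a measure and its noisy counterpart} twice to bound $\E W_1^\pm(\mu+\eps_\mu,\mu)$ and $\E W_1^\pm(\nu+\eps_\nu,\nu)$, each by $\tfrac{2}{\sqrt\pi}\sigma n\log_2 n+\tfrac1{2\sqrt\pi}\sigma n$; (iv) identify and bound the lower-order residual term by $\sqrt2/n$, using the $\ell^2$-on-grid normalization (grid spacing $1/n$, diameter $\sqrt2$) to control a transport of mass $O(1/n)$. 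The main obstacle I anticipate is step (iv): cleanly isolating where the $\sqrt2/n$ enters without it being absorbed harmlessly into the leading term — i.e., understanding whether it is an artifact of a mass-rounding step, of the cyclic-grid diameter, or of a slightly lossy application of the dual/coupling construction from Proposition~\ref{prop: Multiscale}. If the metric property and exact mass-preservation are used as above, the bound should follow almost immediately and the $\sqrt2/n$ is merely a conservative slack; if instead the intended argument routes through the multiscale construction directly on $\mu+\eps_\mu - \nu - \eps_\nu$, then the $\sqrt2/n$ is the $\diam(G_n)\,\delta^{k^*}$ truncation term and I would match $k^*$ to $\log_2 n$ exactly as in the single-image proofs.
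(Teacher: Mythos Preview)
Your high-level strategy is exactly the paper's: apply the triangle inequality for $W_1^\pm$ and reduce to two single-image terms $\E W_1^\pm(\mu,\mu+\eps_\mu)$ and $\E W_1^\pm(\nu,\nu+\eps_\nu)$. The difference is in how that single-image term is bounded. You propose to invoke Theorem~\ref{thm: W1 between a measure and its noisy counterpart} as a black box, which indeed gives $\tfrac{2}{\sqrt\pi}\sigma n\log_2 n+\tfrac{1}{2\sqrt\pi}\sigma n$ for each term and hence the leading $\tfrac{4n\log_2 n+n}{\sqrt\pi}\sigma$ with no extra slack. The paper instead re-derives the single-image bound inline by writing $W_1^\pm(\mu,\mu+\eps)=W_1\big(\mu+(\mu+\eps)_-,(\mu+\eps)_+\big)$ and applying the dyadic bound of Proposition~\ref{prop: Multiscale} to \emph{that} pair. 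Because the total mass of those measures is $(\mu+\eps)_+(G_n)\le 1+\eps_+(G_n)$, the truncation term $\diam(G_n)\,\delta^{k^*}\cdot(\text{mass})$ picks up the unit mass of $\mu$; with $k^*=\log_2 n$ this contributes exactly $\tfrac{1}{\sqrt2 n}$ per image, hence $\tfrac{\sqrt2}{n}$ after doubling. So your last speculation is precisely right: the $\sqrt2/n$ is the $\diam(G_n)\,\delta^{k^*}$ truncation term, and it appears only because the paper routes through the $\mu$-containing version rather than through $W_1(\eps^+,\eps^-)$ as Theorem~\ref{thm: W1 between a measure and its noisy counterpart} does. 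Your route is therefore not only correct but slightly sharper; the $\sqrt2/n$ in the stated theorem is conservative slack, not a genuine contribution you are missing.
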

The proofs of the three theorems above come from a multiscale upper bound on the Wasserstein distance that depends solely on the mass differences (here, the pixels intensities) and therefore enables to control the impact of the noise.

Even though the Wasserstein 2-discrepancy is often used in applications and has nice theoretical properties in the continuous setting, such as the Brenier--McCann theorem \citep{Brenier1991}, its signed counterpart does not enjoy the same metric properties as the signed Wasserstein 1-distance, as was already hinted at in the introduction. 

This absence of a triangle inequality underlies the particular form of the following result.

\begin{thm}\label{thm: Wp bound between noisy images}
        Let $\mu, \nu:G_n \to [0,1]$ be two probability measures on the \(n \times n\) unit grid $G_n$ with cyclic boundary conditions and let
        \( \varepsilon_\mu, \varepsilon_\nu : G_n \to \mathbb{R} \) be signed noise measures that satisfy Assumption~\ref{assum: Noise}.
        For convenience we assume that $n=2^\eta,p>1$, for $\eta \in \mathbb{N}$.
        Then,
        \begin{align}
            &\mathbb{E}
            \big[
                W_p^\pm(\mu+\varepsilon_\mu,\nu+\varepsilon_\nu)
            \big]
            \\
            &\le
            \left(\tfrac{\sqrt2}{2}\right)^{1-\frac1p} W_1(\mu,\nu)^{\frac1p} 
            + 
            \tfrac{\sqrt2}{2}\bigg(\tfrac{4}{\sqrt{\pi}} n\log_2 n+
            \tfrac{1}{\sqrt\pi} n 
            \bigg)^{\tfrac1p}\sigma^{\frac1p}. \nonumber
        \end{align}
\end{thm}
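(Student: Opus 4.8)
The plan is to decompose $W_p^\pm(\mu+\epsilon_\mu,\nu+\epsilon_\nu)$ into a "clean" transport term and a "noise" transport term, control each separately, and combine via the interpolation inequality $W_p \le \diam^{1-1/p} W_1^{1/p}$ (valid since the ground cost is bounded and $\mathsf d^p \le \diam^{p-1}\mathsf d$). Here $\diam(G_n)=\sqrt2/2$ for the unit grid with cyclic boundary (half the side in each coordinate), which explains the prefactors $(\sqrt2/2)^{1-1/p}$ and $\sqrt2/2$ in the statement. Concretely, I would first write, using the definition $W_p^\pm(\mu+\epsilon_\mu,\nu+\epsilon_\nu)=W_p(S,T)$ with $S,T$ the Mainini positive/negative splits of $\mu+\epsilon_\mu$ and $\nu+\epsilon_\nu$, and note that under Assumption~\ref{assum: Noise} both have unit mass so no rescaling is needed. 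Then I would produce a coupling of $S$ and $T$ by composing two transport plans: one that handles the clean discrepancy $\mu-\nu$ and one that handles the noise contributions $\epsilon_\mu^\pm,\epsilon_\nu^\pm$, exploiting that $S-T=(\mu-\nu)+(\epsilon_\mu-\epsilon_\nu)$ split into positive/negative parts. The key inequality to establish at the level of $W_1$ is
\begin{equation}
W_1(S,T)\le W_1(\mu,\nu)+W_1(\epsilon_\mu^+,\epsilon_\mu^-)+W_1(\epsilon_\nu^+,\epsilon_\nu^-),
\end{equation}
which follows from the dual form \eqref{eq: DualW1} and the triangle-type additivity of $W_1$ on signed measures (the same mechanism used in Theorem~\ref{thm: WavBound}).

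Next I would apply the interpolation bound to pass from $W_1$ to $W_p$: writing $W_p^\pm(\mu+\epsilon_\mu,\nu+\epsilon_\nu)=W_p(S,T)\le (\sqrt2/2)^{1-1/p}W_1(S,T)^{1/p}$, and then using subadditivity of $t\mapsto t^{1/p}$ for $p\ge1$ together with the $W_1$ bound above to split into $W_1(\mu,\nu)^{1/p}$ plus noise terms $W_1(\epsilon_\mu^+,\epsilon_\mu^-)^{1/p}+W_1(\epsilon_\nu^+,\epsilon_\nu^-)^{1/p}$. At this point I would take expectations, use Jensen's inequality $\mathbb E[X^{1/p}]\le(\mathbb E X)^{1/p}$ on each noise term, and invoke the expectation bound from Theorem~\ref{thm: W1 between a measure and its noisy counterpart}, namely $\mathbb E W_1(\epsilon^\pm)\le \frac{2}{\sqrt\pi}\sigma n\log_2 n+\frac{1}{2\sqrt\pi}\sigma n$. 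Summing the two noise terms gives $\frac{4}{\sqrt\pi}n\log_2 n+\frac{1}{\sqrt\pi}n$ inside the bracket — but the stated bound has $\frac{2}{\sqrt\pi}n$, so I would either absorb an extra factor through a slightly looser application of Theorem~\ref{thm: W1 between a measure and its noisy counterpart}, or (more likely) bound $\diam(G_n)^{1-1/p}$ and the noise transport jointly so that only one factor of $\sqrt2/2$ and one combined noise expectation appear, matching the displayed constants.

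The main obstacle I anticipate is the bookkeeping in the coupling step: because $W_p^\pm$ is defined through the Mainini splits $S=(\mu+\epsilon_\mu)_+ + (\nu+\epsilon_\nu)_-$ and $T=(\nu+\epsilon_\nu)_+ + (\mu+\epsilon_\mu)_-$, the positive and negative parts of $\mu$, $\nu$, $\epsilon_\mu$, $\epsilon_\nu$ interleave in a way that does not literally decompose as "clean plus noise." One must verify that the signed measure $S-T$ equals $(\mu-\nu)+(\epsilon_\mu-\epsilon_\nu)$ and then realize a transport plan respecting the actual mass distributions $S$ and $T$ (not the formal difference), which requires a careful argument — either via the $W_1$ duality \eqref{eq: DualW1}, where this additivity is transparent since $\langle f,S-T\rangle=\langle f,\mu-\nu\rangle+\langle f,\epsilon_\mu-\epsilon_\nu\rangle$ and each piece is bounded by the respective $W_1$, or by an explicit gluing of couplings. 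I would favor the duality route for $p=1$ and then lift to $p>1$ purely through the interpolation inequality, so that the only probabilistic input is Theorem~\ref{thm: W1 between a measure and its noisy counterpart} and the only analytic input is $\diam(G_n)=\sqrt2/2$ together with Jensen. The remaining delicate point is confirming the exact constant $\frac{2}{\sqrt\pi}n$ rather than $\frac{1}{\sqrt\pi}n$ in the lower-order term; I expect this comes from combining the two independent noise contributions before applying Jensen, i.e. bounding $\mathbb E[(W_1(\epsilon_\mu^\pm)+W_1(\epsilon_\nu^\pm))^{1/p}]\le(2\,\mathbb E W_1(\epsilon^\pm))^{1/p}$ and folding the factor $2$ appropriately, which I would check explicitly when writing out the final calculation.
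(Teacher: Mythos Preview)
Your approach is essentially the paper's: reduce $W_p^\pm$ to $W_1$ via the interpolation inequality $W_p\le D^{1-1/p}W_1^{1/p}$ with $D=\sqrt2/2$, use Kantorovich--Rubinstein duality on $S-T=(\mu-\nu)+(\epsilon_\mu-\epsilon_\nu)$ to separate the clean and the noise contributions, take expectations with Jensen, and invoke Theorem~\ref{thm: W1 between a measure and its noisy counterpart}. The one place you diverge---and the source of your constant mismatch---is the treatment of the noise piece. You split $\langle f,\epsilon_\mu-\epsilon_\nu\rangle$ into $W_1(\epsilon_\mu^+,\epsilon_\mu^-)+W_1(\epsilon_\nu^+,\epsilon_\nu^-)$ and then try to reconcile a stray factor of~$2$; the paper instead keeps $\epsilon^*:=\epsilon_\mu-\epsilon_\nu$ together as a \emph{single} zero-sum Gaussian field with per-pixel variance $2\sigma^2$ and applies Theorem~\ref{thm: W1 between a measure and its noisy counterpart} once with $\sigma$ replaced by $\sqrt2\,\sigma$. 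That $\sqrt2$ (rather than your $2$), pulled inside the $1/p$-th power together with the $(\sqrt2)^{1/p}$ coming from $D^{1-1/p}=(\sqrt2/2)\cdot(\sqrt2)^{1/p}$, is exactly what produces the displayed $\tfrac{4}{\sqrt\pi}$ and $\tfrac{2}{\sqrt\pi}$. Your Jensen-ordering speculation is not the mechanism.
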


The proof of this theorem requires to first relate the signed Wasserstein $p$ discrepancy to a 1-Wasserstein distance between the uncorrupted measures by exploiting suboptimal couplings. This then enables to use a multiscale bound for the remaining part that mostly pertains to noise.

\section{Numerical experiments and results} \label{sec:experiments}
\begin{figure*}
    \hspace{-8pt}
    \includegraphics[width=0.98\textwidth]
    {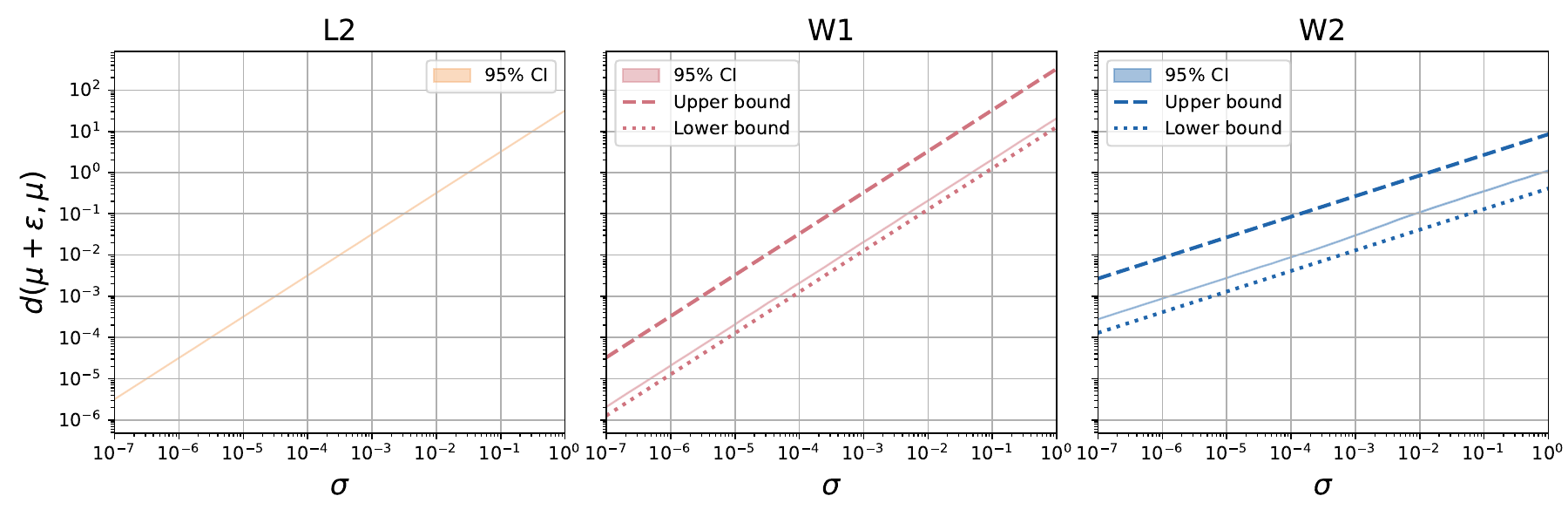}
    \caption{$L_2$, $W_1$ and $W_2$ discrepancies plotted against their theoretical upper and lower bounds as defined in Theorem \ref{thm: W1 between a measure and its noisy counterpart} (for the $W_1$ bounds) and  Theorem \ref{thm: WavBound1Pic} (for the $W_2$ bound).}
    \label{fig:Metric fits for MicroscopyImages resolution 32}
\end{figure*}
\subsection{Quantitative validation of noise scaling}\label{subsec: Quantitative Validation of Noise Scaling}

The first experiment we conduct aims to quantitatively measure how the distance between an image and its noisy counterpart scales when increasing noise variance. This allows for a direct comparison between the empirical behavior of each metric and the theoretical scaling laws derived in \cref{thm: WavBound1Pic}. 
The results are reported as Figure~\ref{fig:Metric fits for MicroscopyImages resolution 32}.
All transport costs calculated are exact and were calculated using the POT Python package \cite{FlamaryCourtyGramfortAlayaBoisbunonEtAl2021}

To this end, we performed 100 independent trials, each time selecting a new, random 32x32 pixel image from the DOTMark 1.0 MicroscopyImages dataset \citep{SchrieberSchuhmacherGottschlich2017}. 
For each image~$\mu$, we generated a noisy version $\mu+\varepsilon$ by adding zero-sum noise $\varepsilon$ satisfying Assumption~\ref{assum: Noise}, with variances ranging from $10^{-7}$ to $1$. We computed the difference between the original image and the noisy one for $L^2$, $W_2$ and $W_1$ imposing cyclic boundary conditions (toroidal topology).
This empirical result, where the $W_2$ cost scales with an exponent of approximately 0.5, suggests that the bound derived in \cref{thm: WavBound1Pic} correctly captures the behavior of the signed 2-Wasserstein as a function of the noise variability $\sigma$.

\subsection{Visualizing robustness of inter-image distances}\label{subsec: Visualizing Robustness of Inter-Image Distances}

 We now investigate how well the different metrics preserve the original distance between two images when the latter are progressively corrupted by noise.
For this experiment, we selected two distinct $32\times32$ pixel images from the DOTMark dataset and simultaneously corrupted them with different instances of zero-sum additive noise with a standard deviation ranging from $10^{-7}$ to $10^{-1}$. At each noise level, we computed the $W_1,W_2$ and $L^2$ discrepancies between the two noisy images.  The results were averaged across 100~experiments. To evaluate stability, we computed a distance ratio by dividing the distance between the  noisy images by the distance between the original, clean images. A ratio that remains close to 1 indicates robustness to noise.
The output is displayed on Figure~\ref{fig: two images - all distances ratios}, which we already exhibited in the introduction.
On that figure, the top panel visually depicts the degradation of the images as noise increases, while the main plot shows the distance ratio for each metric. 
The $L^2$ ratio (salmon-colored line) is the first to sharply diverge from 1, showing that the measured distance is quickly dominated by the noise. The $W_2$ ratio (blue line) is the most stable, remaining closest to the ideal ratio of 1 for the largest range of noise levels.

This experiment serves as a practical illustration of the scaling laws: as the $W_2$ discrepancy grows more slowly with noise, the underlying distance between the clean signals is better preserved.

\paragraph{Visualisation of the bound of the inter-image distance.}
\begin{figure}
    \centering    
    \includegraphics[width=0.485\textwidth]{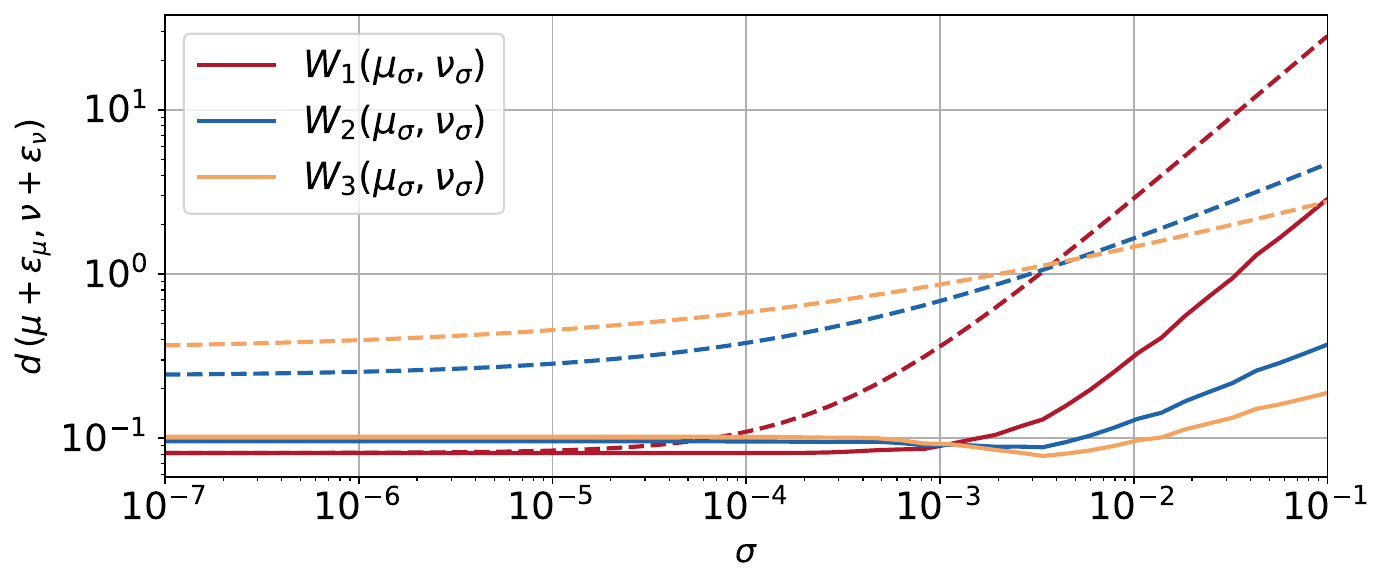}
    \captionsetup{skip=1pt}
    \caption{Distances between two randomly sampled images from the DOTMark microscopy dataset, both corrupted with noise sampled from the zero-sum normal distribution, in dashed (matching colors) we have the bounds for each $p$ from \cref{thm: WavBound1Pic}.}
    \label{fig: Distances between two noised images with bounds}
\end{figure}
To assess the bound established in \cref{thm: Wp bound between noisy images}, we have plotted the distance between two Microscopy images from the DOTMark dataset being gradually corrupted by noise with the same parameter $\sigma$. We see in Figure~\ref{fig: Distances between two noised images with bounds} how tight the bound might be for $W_1$ (in the case of small noise) while it seems to not be tight for $W_2$ and $W_3$. We postulate that this is because the images used in this experiment are far from the ``worst case scenario'' in which the images are very similar to each other, or very far apart. We analyze these scenarios where the bound might be tighter in Figure~\ref{fig: Distances between two centered noised images with bounds}.
\begin{figure}
    \centering    
    \includegraphics[width=0.465\textwidth]{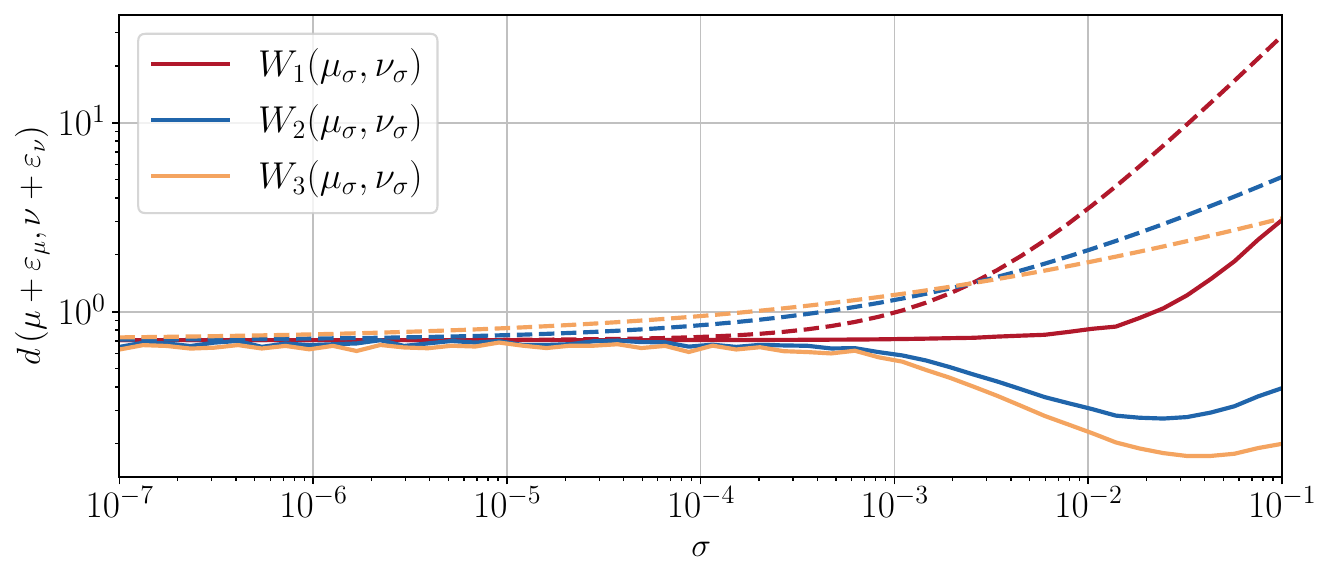}
    \caption{Distances between two noised images, one's mass concentrated at the pixel $(8,8)$ and the other's at $(24,24)$.
    In dashed matching colors we have the bounds for each $p$ from \cref{thm: Wp bound between noisy images}.}
    \label{fig: Distances between two centered noised images with bounds}
\end{figure}

\paragraph{Characterizing metric behavior across image types.}\label{paragraph: Characterizing Metric Behavior Across Image Types}

While $W_2$ is robust, its behavior is not uniform. The purpose of the next experiment is to explore how the metrics' robustness varies across different classes of images and to highlight a key nuance of the signed $W_2$ metric.
\begin{figure}
    \centering    
     \includegraphics[width=0.48\textwidth]{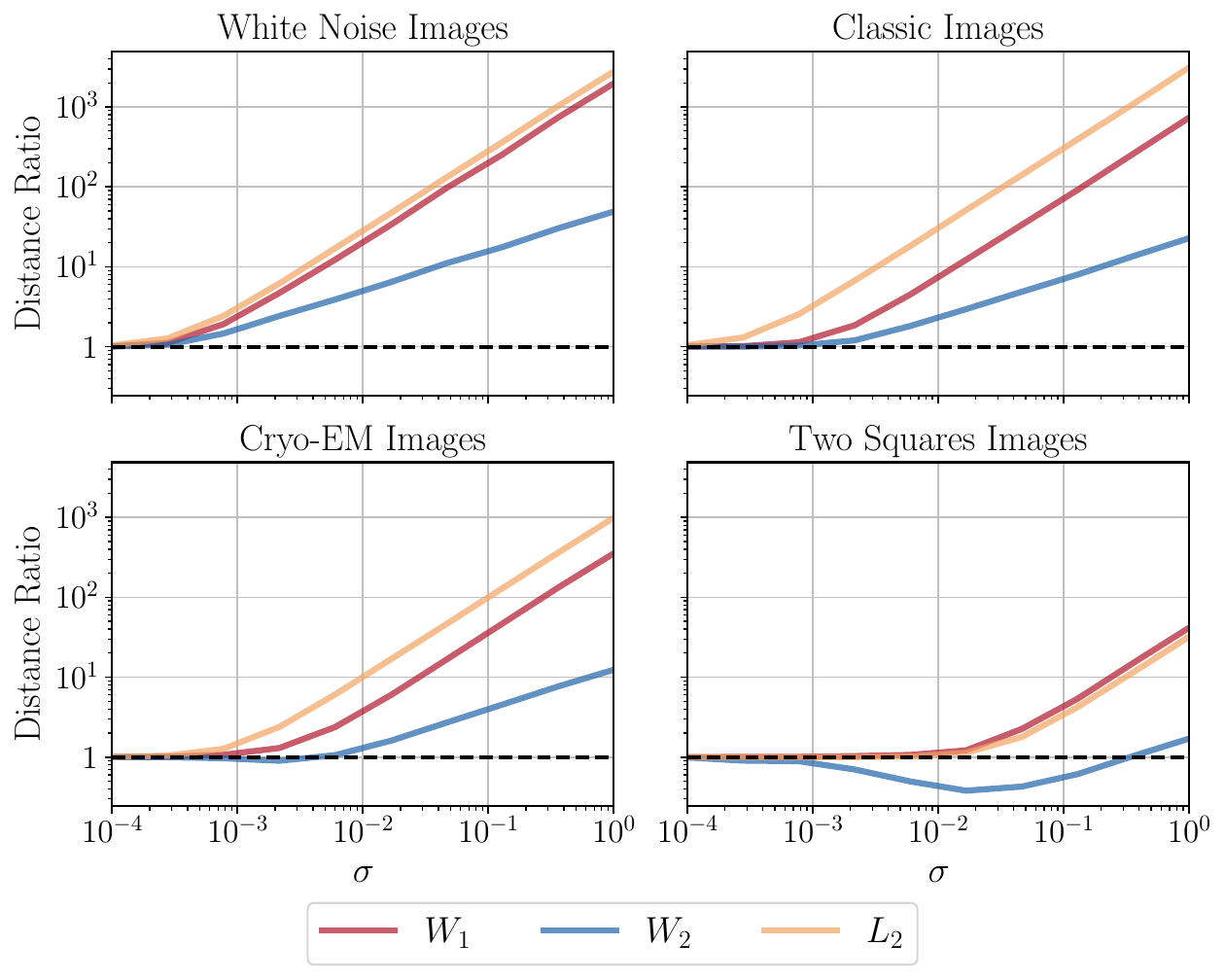}
    \caption{Ratios of the distance between the noisy images and the original images, for different kinds of images.}
    \label{fig: Ratios for different kinds of images}
\end{figure}
We repeated the distance ratio experiment from the previous section on four distinct image classes: white noise, typical cryo-EM projections, classic microscopy images, and synthetic images of two widely separated squares.
The results are shown in Figure~\ref{fig: Ratios for different kinds of images}. In this figure, $W_2$ is shown to scale favorably.
However, it also exhibits a lack of monotonicity with respect to the noise level. A phenomenon we study further in Section~\ref{sec:decreasing_distance}.

\subsection{Analysis of cryo-electron microscopy images}\label{subsec: Application to Cryo-EM Image Alignment}
\begin{figure*}[t!]
    \centering    
    \includegraphics[width=1\textwidth]
    {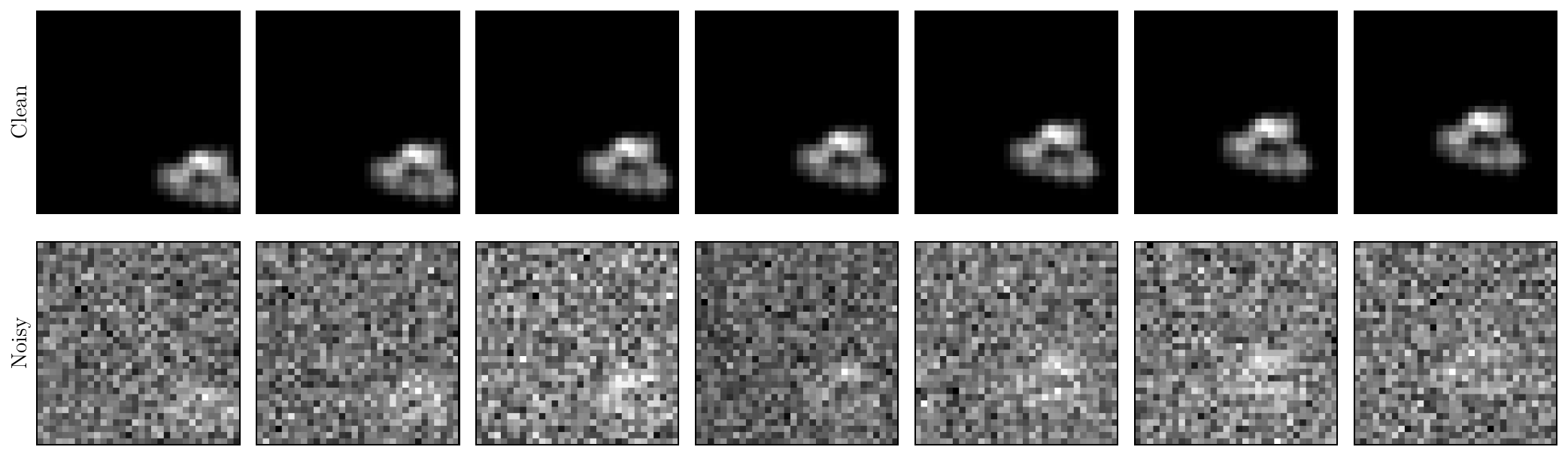}
    \caption{Projection images of the E. Coli Hsp90 molecule, with and without noise.}
    \label{fig: Molecule - original images vs noisy ones, only turning around}
\end{figure*}
Single-particle cryo-electron microscopy (cryo-EM) is a method for reconstructing the 3D structure of proteins and other large molecules.
In this method, samples of a molecule of interest are frozen and then imaged using a transmission electron microscope.
This results in many thousands of tomographic projections of the target molecule.
The positions and orientations of the individual molecules are typically unknown and the images have extremely high levels of noise. Nonetheless, sophisticated computational methods were successful in recovering many different high-resolution 3D structures. Many important challenges remain. In particular, the reconstruction of flexible macromolecules with continuous degrees of freedom. See \citet{BendoryBartesaghiSinger2020} for a survey of the computational challenges in cryo-EM.

In this section, we wish to demonstrate the potential benefit of Wasserstein metrics in the high-noise cryo-EM regime to the difficult task of recovering continuous conformational manifolds \citep{KileelMoscovichZeleskoSinger2021}.
To this end, we generated 20 different projections of the E. Coli hsp90 protein \citep{ShiauHarrisSouthworthAgard2006} in different conformational states using cryoJAX \citep{OBrienSilva-SanchezWoollardJeHansonEtAl2026}. The location of the protein was shifted and the pictures were corrupted with high levels of noise to mimic the poor signal-to-noise ratio in real cryo electron microscopy images. For simplicity, all the images were normalized to sum to one.
The goal is to assess how well each metric can recover known geometric relationships between particle images that undergo rotation and translation.
To illustrate the difficulty of the task, Figure~\ref{fig: Molecule - original images vs noisy ones, only turning around} shows a sample of the clean images and their noisy counterparts.

\begin{figure}
    \includegraphics[width=0.46\textwidth]{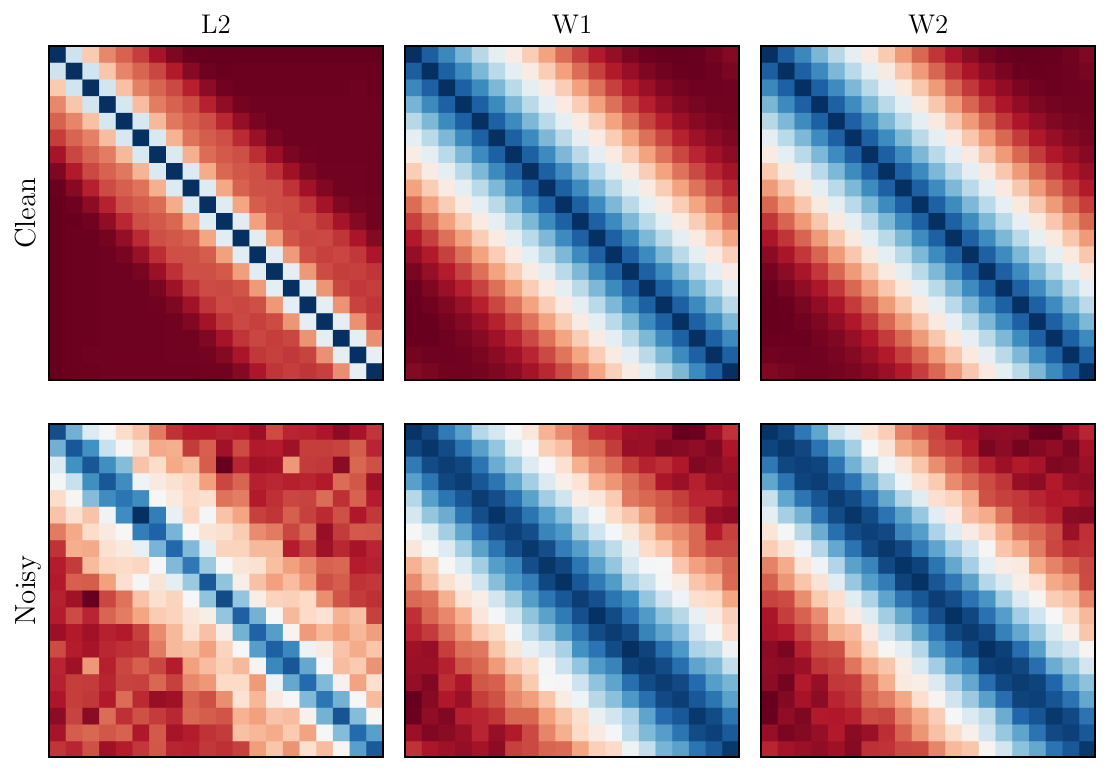}
    \caption{Top panel: Distance matrices between the different structures (noiseless). 
    Bottom panel: Noisy mean distances over 100 experiments.
    Gradual blue-to-red gradients are better.}
    \label{fig: Pairwise distances - turning and moving}
\end{figure}
\begin{figure}
    \hspace{4pt}
    \includegraphics[width=0.35\textwidth]%
    {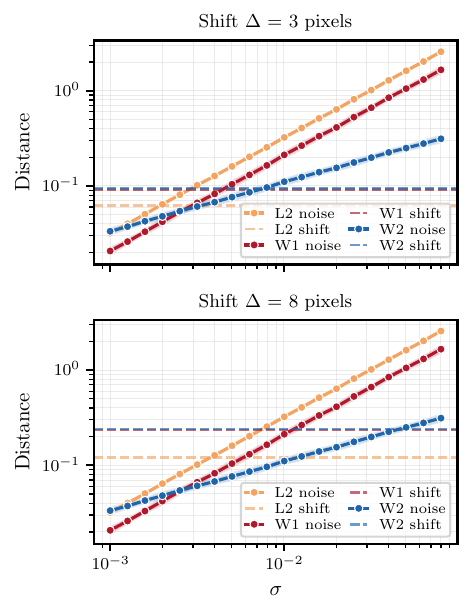}
    \caption{Sensitivity to additive noise versus spatial shifts. Calculated distances under pure Gaussian noise (solid lines) plotted against the noise standard deviation $\sigma$ across pixel shifts of $\Delta = 3$ (top) and $\Delta = 8$ (bottom).}
    \label{fig: Noising vs shifting}
\end{figure}
For each discrepancy, we compute all the pairwise distances, resulting in a 20x20 matrix which we can see in Figure~\ref{fig: Pairwise distances - turning and moving}. 
The top row shows the ground-truth distance matrices from the clean images, reflecting the structure of the transformations. 
The bottom row shows the matrices computed from their noisy counterparts.
Under heavy noise, the $L^2$ distance matrix degrades into a random pattern, losing the original geometric structure.  
In contrast, the $W_2$ and $W_1$ cost matrices preserve the global diagonal structure of the ground-truth matrix. 
\paragraph{Benefits of $W_2$ over $W_1$} \label{Superiority of W2 over W1}
To evaluate the practical utility of using $W_2$ over $W_1$ in noisy settings, we design an experiment which contrasts the impact of structural changes with that of strong additive noise. While both discrepancies show better scaling compared to the $L_2$ metric, their relative distance rankings under pure additive noise like we see in Figure~\ref{fig: Pairwise distances - turning and moving} are similar, masking the benefits of $W_2$.

In Figure~\ref{fig: Noising vs shifting}, we compare a clean reference image to two variants: one that is spatially shifted (representing a genuine structural change, dotted) and one corrupted by pure Gaussian noise. We observe that $W_1$ is more easily overwhelmed by noise. In contrast, $W_2$ heavily penalizes the spatial shift due to its quadratic cost while ``absorbing" local noise fluctuations. Our simulations indicate that $W_2$ can tolerate approximately 3.5x more noise than $W_1$ before it begins to confuse additive noise with a shift.

\subsection{The decreasing distance phenomenon} \label{sec:decreasing_distance}

Interestingly the estimated distance between 
images can even decrease when the noise increases. One can see an example in Figure~\ref{fig: Distances between two centered noised images with bounds} where for $p>1$ we get a dip in the distance, showing that the images are getting closer together, similarly to the ``two square images'' in Figure~\ref{fig: Ratios for different kinds of images}.
\begin{figure*}
    \centering    
    \includegraphics[width=0.95\textwidth]{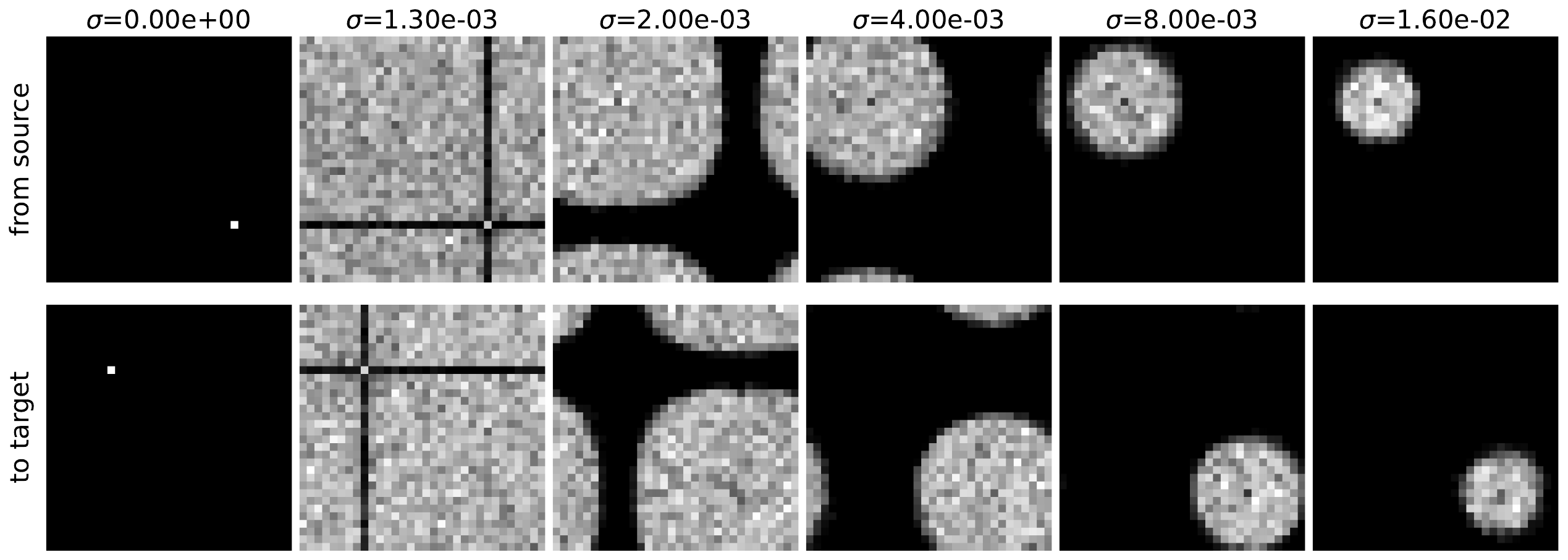}
    \caption{(Top panels) Where the mass of the original pixel (8,8) goes. (Bottom panels) Where the mass of the target pixel (24, 24) comes from in the optimal transport map between noisy versions of the single pixel images.}
    \label{fig: Transport between two pixels}
\end{figure*}
This phenomenon, which at first sight might be surprising, can be explained by the fact that for sparse pictures, the noise  appearing between two structures can be used to ``bridge'' the transport distance between them, like we see in Figure~\ref{fig: Transport between two pixels}. 
Instead of transporting mass across the entire distance between the two structures, the optimal transport plan utilizes the background noise as 'stepping stones.' Mass from structure A is moved to nearby noise peaks, while noise peaks near structure B are moved into structure B. This effectively shortens the total transport cost compared to the clean case.

\section{Conclusion and future work}

In this paper, we have investigated the behavior of the signed Wasserstein discrepancy under noise corruption of the pictures. Our theoretical contributions provide bounds for various situations of interest. 
In particular, certain bounds establish a better noise robustness of the signed Wasserstein discrepancy than the ubiquitous $L^2$ metric. 
Our numerical experiments on the DOTMark dataset corroborate these findings, with empirical results confirming that the $W_2$ discrepancy is more resilient to noise than both $L^2$ and $W_1$ distances. These results make a strong case for its use in noise-plagued applications like cryo-EM.

Despite these results, there remains room for additional work. A primary challenge would be to establish sharp bounds for 
\( \mathbb{E} W_p^\pm ( \mu_\varepsilon, \nu_\varepsilon) - \mathbb{E} W_p^\pm ( \mu, \nu) \), which is hindered by the lack of triangle inequality. The numerical experiments further suggest that our bounds, despite capturing the correct behavior, are not tight.
Finally, as our theory suggests that robustness increases with higher values of $p$, the interest of such choices of exponents for practical applications should be investigated in future works.

\paragraph{Reproducibility.} The code for running the simulations and generating all the figures in this paper is available at:
{\small\url{https://github.com/warik21/Quantifying-wasserstein-noise-sensitivity}}.

\section*{Impact statement}
It is our hope that this theoretical work will be a first step in understanding the robustness of optimal transport in high-noise imaging modalities such as cryo-electron microscopy or certain medical imaging applications, supporting methodological developments in these important fields.

We do not foresee any negative impacts.
However, as a theoretical work, we admit that the wider societal impacts are pure speculation.

\section*{Acknowledgments}
AM is supported in part by ISF Grant No. 1662/22 and by NSF-BSF Grant No.
2022778.

We would like to thank Andrea Codegoni, Tobías I. Liaudat, Nir Sharon, Tal Wagner and the anonymous referees for interesting discussions and important feedback.

\FloatBarrier

\bibliography{optimal-transport-robustness}
\bibliographystyle{icml2026}

\appendix

\section{Proofs}
\label{app: DefProofs}

\subsection{Proof of Propositions}
\paragraph{Proposition \ref{prop: Noise model properties}}
In the context of Assumption~\ref{assum: Noise}, the following holds.
\begin{enumerate}
    \item The marginal distribution for each component is a Gaussian: $N_i \sim \mathcal{N}(0, \sigma^2)$.
    \item The sum of the components is zero : $\sum_{i=1}^m N_i = 0$.
\end{enumerate}

\begin{proof}[Proof of Proposition~\ref{prop: Noise model properties}]
We prove each point separately.
\begin{enumerate}
    \item The marginal variance of each component $N_i$ is given by the diagonal entry $\Sigma_{ii}$, which is $\sigma^2$ by definition. Since the parent distribution is a multivariate normal with a mean vector of zero, each component is marginally distributed as $\mathcal{N}(0, \sigma^2)$.

    \item We compute the variance of the sum of the components:
        \begin{align}
            \text{Var}&\left(\sum_{i=1}^m N_i\right) = \sum_{i,j} \text{Cov}(N_i, N_j) \\
            &= \sum_{i=1}^m \sum_{j=1}^m \Sigma_{ij} \\
            &= \sum_{i=1}^m \text{Var}(N_i) + \sum_{i \ne j} \text{Cov}(N_i, N_j) \\
            &= m \cdot \sigma^2 + m(m-1) \cdot \left(-\frac{\sigma^2}{m-1}\right) \\
            &= m\sigma^2 - m\sigma^2 = 0.
        \end{align}
    The expectation of the sum is
    \begin{align}
        \mathbb{E}\left[\sum_{i=1}^m N_i\right] = \sum_{i=1}^m \mathbb{E}[N_i] = 0.
    \end{align}
    A random variable with zero mean and zero variance must be equal zero almost surely. Thus, $\sum_{i=1}^m N_i = 0$.
\end{enumerate}
\end{proof}

\begin{prop}[Wasserstein Distance Decomposition]
\label{prop: wasserstein distance decomposition}
Let $\mu$ and $\nu$ be two non-negative measures on a space $\mathcal{X}$ with equal total mass. It holds that
\begin{align}
    W_p^p(\mu, \nu) \le W_p^p\left((\mu-\nu)_+, (\nu-\mu)_+\right).
\end{align}
\end{prop}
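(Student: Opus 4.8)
The plan is to exhibit an explicit transport plan from $\mu$ to $\nu$ that realizes the bound: glue an optimal coupling of the ``discrepancy parts'' $(\mu-\nu)_+$ and $(\nu-\mu)_+$ together with the identity coupling on the mass that $\mu$ and $\nu$ have in common, and note that the latter costs nothing.

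First I would recall the Hahn--Jordan / lattice decomposition. Setting $\rho := \mu \wedge \nu$ (the largest measure dominated by both, i.e.\ the pointwise minimum of the densities on the grid), the elementary identity $\min(a,b) + (a-b)_+ = a$ gives $\mu = \rho + (\mu-\nu)_+$ and $\nu = \rho + (\nu-\mu)_+$, together with $(\mu-\nu)_- = (\nu-\mu)_+$. Equality of total masses forces $\int \diff(\mu-\nu) = 0$, so $(\mu-\nu)_+$ and $(\nu-\mu)_+$ share the same total mass; if that mass is zero then $\mu = \nu$ and the inequality is trivial, so I may assume it is positive.

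Next I would take, for each $\eta > 0$, a coupling $\pi_\eta \in \Gamma\big((\mu-\nu)_+, (\nu-\mu)_+\big)$ with cost at most $W_p^p\big((\mu-\nu)_+, (\nu-\mu)_+\big) + \eta$ (an exact minimizer exists in the discrete setting), and let $\Delta_\#\rho$ denote the push-forward of $\rho$ under $x \mapsto (x,x)$. Then $\pi := \pi_\eta + \Delta_\#\rho$ has first marginal $(\mu-\nu)_+ + \rho = \mu$ and second marginal $(\nu-\mu)_+ + \rho = \nu$, hence $\pi \in \Gamma(\mu,\nu)$. Since $\mathsf{d}(x,x) = 0$, its cost is
\begin{align}
\int \mathsf{d}(x,y)^p \diff\pi
= \int \mathsf{d}(x,y)^p \diff\pi_\eta + \int \mathsf{d}(x,x)^p \diff\rho
\le W_p^p\big((\mu-\nu)_+, (\nu-\mu)_+\big) + \eta,
\end{align}
and feasibility of $\pi$ gives $W_p^p(\mu,\nu) \le W_p^p\big((\mu-\nu)_+, (\nu-\mu)_+\big) + \eta$; letting $\eta \downarrow 0$ finishes the argument.

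The only thing requiring real care is checking that the glued measure $\pi$ is a genuine transport plan, which reduces to the density identity $\min(a,b)+(a-b)_+ = a$; apart from that, the mild technical points (existence of a near-optimal plan, measurability of the diagonal embedding) are standard under the assumptions already in force, so I do not expect a serious obstacle.
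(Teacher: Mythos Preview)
Your proof is correct and follows essentially the same approach as the paper: decompose $\mu$ and $\nu$ into the common part $\rho=\mu\wedge\nu$ and the disjoint parts $(\mu-\nu)_+$, $(\nu-\mu)_+$, then glue an optimal (or near-optimal) coupling of the disjoint parts with the identity coupling on $\rho$. The only cosmetic difference is that you work with an $\eta$-near-optimal plan and pass to the limit, whereas the paper invokes the optimal plan directly.
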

\begin{proof}[Proof of Proposition~\ref{prop: wasserstein distance decomposition}]
    We can decompose any two measures $\mu$ and $\nu$ into a common part and two disjoint parts. Let $m$ be the largest measure such that for all Borel set $A$
    \begin{align}
    m(A) \le \mu(A) \text{ and } m(A) \le \nu(A).
    \end{align}
    The remaining, disjoint parts of each measure are given by $\mu' := \mu - m = (\mu-\nu)_+$ as well as  $\nu' := \nu - m = (\nu-\mu)_+$. Thus, we can write:
    \begin{align}
        \mu = m + \mu' \qquad  \nu = m + \nu'
    \end{align}
    Since $\mu$ and $\nu$ have the same total mass, it follows that $\mu'$ and $\nu'$ also have the same total mass.

    We can then construct a valid transport plan $\pi$ from $\mu$ to $\nu$ by handling the common and disjoint parts separately.
    For the disjoint parts, let $\pi'_{\text{opt}}$ be the optimal transport plan from $\mu'$ to $\nu'$, whose cost is, by definition, $W_p^p(\mu', \nu')$.
    For the common part, we use the identity plan, $\pi_{\text{id}}$, which transports the mass at each point $x$ to itself. The cost of this plan is $\int_{\mathcal{X}} d(x,x)^p  \diff\pi_{\text{id}}(x) = 0$.

    Using the gluing principle, we can form a complete transport plan $\pi = \pi_{\text{id}} + \pi'_{\text{opt}}$. This is a valid plan transporting $\mu$ to $\nu$. Its total cost is the sum of the costs of its components:
    \begin{align}
        \text{Cost}(\pi) = \text{Cost}(\pi_{\text{id}}) + \text{Cost}(\pi'_{\text{opt}}) = 0 + W_p^p(\mu', \nu')
    \end{align}
    By the definition of the Wasserstein distance as the infimum of costs over all possible transport plans, the  optimal cost must be less than or equal to the cost of this specific plan:
    \begin{align}
        W_p^p(\mu, \nu) \le W_p^p(\mu', \nu')
    \end{align}
  Substituting the definitions of $\mu'$ and $\nu'$ completes the proof.
\end{proof}

\begin{prop}\label{prop: additive noise wasserstein}
    Let $\mu:G_n \to [0,1]$ be a probability measure on the $n \times n$ unit grid $G_n$ with cyclic boundary conditions and let $\varepsilon : G_n \to \mathbb{R}$ be a signed noise measure satisfying Assumption~\ref{assum: Noise}.
    Then, for $p>1$,
    \begin{equation}
        W^\pm_p(\mu,\mu+\varepsilon) \le W_p(\varepsilon_-,\varepsilon_+),
    \end{equation}
    where $W_p$ denotes the standard extension of the Wasserstein distance to unnormalized non-negative measures of equal mass.
\end{prop}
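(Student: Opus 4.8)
The plan is to prove this as essentially a special case of Proposition~\ref{prop: Wp to W1 on noisy images} combined with Proposition~\ref{prop: wasserstein distance decomposition}, but keeping the bound in the sharper form stated here (with $W_p$ of the noise parts rather than the $W_1$-relaxed version). First I would unfold the definition of the signed cost: since $\mu$ and $\mu+\eps$ share the same total mass (because $\eps$ is zero-sum by Proposition~\ref{prop: Noise model properties}), we have
\begin{equation}
W^\pm_p(\mu,\mu+\eps) = W_p\big((\mu)_+ + (\mu+\eps)_-,\ (\mu+\eps)_+ + (\mu)_-\big) = W_p\big(\mu + (\mu+\eps)_-,\ (\mu+\eps)_+\big),
\end{equation}
using that $\mu \ge 0$ so $(\mu)_+ = \mu$ and $(\mu)_- = 0$.

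Next I would apply Proposition~\ref{prop: wasserstein distance decomposition} (the ``drop the overlap'' inequality) to the two nonnegative measures $\alpha := \mu + (\mu+\eps)_-$ and $\beta := (\mu+\eps)_+$. Their difference is $\alpha - \beta = \mu + (\mu+\eps)_- - (\mu+\eps)_+ = \mu - (\mu+\eps) = -\eps$. Hence $(\alpha-\beta)_+ = \eps_-$ and $(\beta-\alpha)_+ = \eps_+$, so the proposition gives directly
\begin{equation}
W^\pm_p(\mu,\mu+\eps) = W_p(\alpha,\beta) \le W_p\big((\alpha-\beta)_+,(\beta-\alpha)_+\big) = W_p(\eps_-,\eps_+),
\end{equation}
which is exactly the claim. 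One should double-check that $\eps_+$ and $\eps_-$ have equal mass — this is again the zero-sum property from Proposition~\ref{prop: Noise model properties}, which is needed for $W_p(\eps_-,\eps_+)$ to be well-defined — and that the diameter / grid identification issues do not interfere, but neither of these is substantive here.

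I do not expect a serious obstacle: the whole argument is bookkeeping with positive/negative parts plus one invocation of the already-proved decomposition inequality. The only mild subtlety worth stating carefully is the identity $(\mu+\eps)_- - (\mu+\eps)_+ = -(\mu+\eps)$ and then combining it with $+\mu$ to get $-\eps$; getting the signs right here is the one place an error could creep in. Note also that although the statement says $p>1$, the argument works verbatim for $p=1$ as well — the restriction is presumably only because for $p=1$ a sharper/equal relation already follows from Kantorovich--Rubinstein duality (cf.\ the discussion around \eqref{eq: DualW1}), so this proposition is stated in the regime where it is actually needed.
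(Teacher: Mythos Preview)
Your proposal is correct and follows essentially the same approach as the paper: unfold the definition of $W_p^\pm$ using $\mu\ge 0$, then apply Proposition~\ref{prop: wasserstein distance decomposition} to $\alpha=\mu+(\mu+\eps)_-$ and $\beta=(\mu+\eps)_+$, and simplify $\alpha-\beta=-\eps$. The paper carries out the same steps, just writing out the algebraic simplification of $(\alpha-\beta)_+$ and $(\beta-\alpha)_+$ a bit more explicitly; your remarks on the zero-sum property and on the validity for $p=1$ are accurate side observations.
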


\begin{proof}[Proof of Proposition~\ref{prop: additive noise wasserstein}]\label{proof: additive noise wasserstein}
   By definition, 
    \begin{align}
        (W_p^\pm)^p(\mu,\mu+\varepsilon)
        =W_p^p\big(\mu+(\mu+\varepsilon)_- ,\, (\mu+\varepsilon)_+\big).
    \end{align}
    Thus, using Proposition~\ref{prop: wasserstein distance decomposition} on $\mu+(\mu+\varepsilon)_-$ and $(\mu+\varepsilon)_+$, 
    \begin{align}
        W_p^p&\big(\mu+(\mu+\varepsilon)_-,
            (\mu+\varepsilon)_+\big)\\
        &\le W_p^p\big((\mu+(\mu+\varepsilon)_- - (\mu+\varepsilon)_+)_+,\\ 
            &\quad ((\mu+\varepsilon)_+ - (\mu+(\mu+\varepsilon)_-))_+)\\
        &= W_p^p\big((\mu - ((\mu+\varepsilon)_+ - (\mu+\varepsilon)_-))_+,
            \\&\quad((\mu+\varepsilon)_+ -(\mu+\varepsilon)_- - \mu))_+\big)\\
        &=W_p^p\big((\mu - (\mu+\varepsilon))_+,
            (\mu+\varepsilon - \mu)_+\big)\\
        &=W_p^p\big((-\varepsilon)_+,\varepsilon_+\big)
        =W_p^p(\varepsilon_-,\varepsilon_+). \qedhere
    \end{align}
\end{proof}

\begin{prop}\label{prop: Decomposition via KR duality}
For any two images $\mu,\nu:G_n\to[0,\infty)$ and independent noises $\varepsilon_\mu,\varepsilon_\nu$ as in Assumption~\ref{assum: Noise},
\begin{align}
    &W_1([\mu+\varepsilon_\mu - \nu - \varepsilon_\nu]_+, [\nu+\varepsilon_\nu - \mu - \varepsilon_\mu]_+)\\
    &\qquad\le W_1(\mu,\nu)+W_1\big((\varepsilon_\mu-\varepsilon_\nu)_+,(\varepsilon_\mu-\varepsilon_\nu)_-\big). \nonumber
\end{align}
\end{prop}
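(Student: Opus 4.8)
The plan is to reduce the inequality to Kantorovich--Rubinstein duality and then invoke subadditivity of a supremum. Write $\phi := \mu - \nu$ and $\psi := \epsilon_\mu - \epsilon_\nu$, viewed as signed measures on $G_n$. As needed for $W_1(\mu,\nu)$ to be defined we take $\mu(G_n)=\nu(G_n)$ (this is implicit, since otherwise the two nonnegative measures on the left-hand side would not have matching mass), and by Proposition~\ref{prop: Noise model properties} the noises $\epsilon_\mu,\epsilon_\nu$ have total mass zero. Hence $\phi$, $\psi$, and $\phi+\psi = \mu+\epsilon_\mu-\nu-\epsilon_\nu$ are all signed measures of total mass zero. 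For each such $\rho$ the positive and negative parts have equal mass, so the balanced $W_1$ applies and, by \eqref{eq: DualW1},
\[
W_1(\rho_+,\rho_-) \;=\; \sup_{f\in\operatorname{Lip}_1(G_n)} \langle f,\rho\rangle
\qquad\text{for every signed }\rho\text{ with }\rho(G_n)=0 .
\]

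Next I would apply this identity three times. With $\rho=\phi$ it gives $W_1(\phi_+,\phi_-) = \sup_f\langle f,\mu-\nu\rangle = W_1(\mu,\nu)$ (here again by \eqref{eq: DualW1} for the balanced pair $\mu,\nu$; equivalently, the common overlap of $\mu$ and $\nu$ is transported at zero cost, cf.\ Proposition~\ref{prop: wasserstein distance decomposition}). With $\rho=\psi$ it gives $W_1(\psi_+,\psi_-) = W_1\big((\epsilon_\mu-\epsilon_\nu)_+,(\epsilon_\mu-\epsilon_\nu)_-\big)$. With $\rho=\phi+\psi$ it rewrites the left-hand side of the claim, which equals $W_1\big((\phi+\psi)_+,(\phi+\psi)_-\big)$, as $\sup_{f\in\operatorname{Lip}_1}\langle f,\phi+\psi\rangle$.

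Finally, for every fixed $f\in\operatorname{Lip}_1$ one has $\langle f,\phi+\psi\rangle = \langle f,\phi\rangle + \langle f,\psi\rangle \le \sup_{g\in\operatorname{Lip}_1}\langle g,\phi\rangle + \sup_{g\in\operatorname{Lip}_1}\langle g,\psi\rangle$; taking the supremum over $f$ gives $\sup_f\langle f,\phi+\psi\rangle \le \sup_f\langle f,\phi\rangle + \sup_f\langle f,\psi\rangle$, which by the previous paragraph is exactly $W_1(\mu,\nu) + W_1\big((\epsilon_\mu-\epsilon_\nu)_+,(\epsilon_\mu-\epsilon_\nu)_-\big)$. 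This is the asserted bound.

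I do not anticipate a genuine obstacle: the statement is essentially the (signed) triangle inequality for $W_1$ read through duality. The only point that requires a little care is the mass bookkeeping\textemdash verifying that $\phi$, $\psi$, and $\phi+\psi$ are all balanced so that the simplified duality \eqref{eq: DualW1} is legitimate, and noting that replacing $(\mu,\nu)$ by $(\phi_+,\phi_-)$ leaves $W_1$ unchanged. Note that the independence of $\epsilon_\mu$ and $\epsilon_\nu$ plays no role in this proposition; it is only used downstream, in Proposition~\ref{prop: Wp to W1 on noisy images}.
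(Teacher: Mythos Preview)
Your proof is correct and follows essentially the same route as the paper: apply Kantorovich--Rubinstein duality to rewrite the left-hand side as $\sup_{f\in\operatorname{Lip}_1}\langle f,(\mu-\nu)+(\epsilon_\mu-\epsilon_\nu)\rangle$, then use subadditivity of the supremum and duality again on each piece. If anything, your version is slightly more careful about the mass bookkeeping (ensuring all three signed measures are balanced so that \eqref{eq: DualW1} applies) and correctly notes that independence of the noises is irrelevant here.
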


\begin{proof}[Proof of Proposition~\ref{prop: Decomposition via KR duality}]\label{Proof - prop: Decomposition via KR duality}
By the Kantorovich–Rubinstein duality,
\begin{align}
    W_1 &\left([\mu+\varepsilon_\mu - \nu - \varepsilon_\nu]_+, 
        [\nu+\varepsilon_\nu - \mu - \varepsilon_\mu]_+\right)=
    \\&= \sup_{\|f\|_{\mathrm{Lip}}\le 1}\int f(\mu-\nu) + \int f(\varepsilon_\mu-\varepsilon_\nu). \nonumber
\end{align}
For the first term, by KR duality,
\begin{equation}
    \sup_{\|f\|_{\mathrm{Lip}}\le 1}\int f(\mu-\nu) 
    \le W_1(\mu,\nu)
\end{equation}
For the second term, via the Jordan decomposition,
\begin{equation}
     \sup_{\|f\|_{\mathrm{Lip}}\le 1}\int f(\varepsilon_\mu-\varepsilon_\nu) 
     \le W_1((\varepsilon_\mu-\varepsilon_\nu)_+,(\varepsilon_\mu-\varepsilon_\nu)_-).
\end{equation}
Adding these together, we receive the desired bound.
\end{proof}

\begin{prop}\label{prop: Wp to W1 on noisy images}
Let $\mu,\nu:G_n\to[0,\infty)$ be images on the square grid $G_n$ with spacing $h=1/n$, and let $\varepsilon_\mu,\varepsilon_\nu$ satisfy Assumption~\ref{assum: Noise}. Identifying $G_n$ with the $2$-torus, let $D:=\mathrm{diam}(G_n)=\sqrt{2}/2$. Then, for any $p\ge1$,
\begin{align}\label{eq:two-noisy-main}
&W_p^\pm(\mu+\varepsilon_\mu,\nu+\varepsilon_\nu)
\\&\ \le
D^{1-\frac1p}\big( W_1(\mu,\nu) + W_1(\varepsilon^*_+,\varepsilon^*_-) \big)^{\frac1p}, \quad\varepsilon^*:=\varepsilon_\mu-\varepsilon_\nu \nonumber
\end{align}
\end{prop}

\begin{proof}
By definition of the signed distance,
\begin{align}
W_p^\pm(\mu+\varepsilon_\mu,\nu+\varepsilon_\nu)
= W_p\big(&(\mu{+}\varepsilon_\mu)_+ + (\nu{+}\varepsilon_\nu)_-,\\&\ (\nu{+}\varepsilon_\nu)_+ + (\mu{+}\varepsilon_\mu)_-\big). \nonumber
\end{align}
Applying the decomposition inequality  of Prop.~\ref{prop: wasserstein distance decomposition} (which “drops the overlap”) to these nonnegative arguments gives
\begin{align}
W_p^\pm(\mu+\varepsilon_\mu,\nu+\varepsilon_\nu)
\le W_p&\big([\mu{+}\varepsilon_\mu{-}\nu{-}\varepsilon_\nu]_+,\\&\ [\nu{+}\varepsilon_\nu{-}\mu{-}\varepsilon_\mu]_+\big). \nonumber
\end{align}

For general $p\ge1$ on a bounded domain of diameter $D$ we use the standard comparison
\begin{equation}
    W_p(\alpha,\beta)\le D^{1-\frac1p}W_1(\alpha,\beta)^{\frac1p}.
\end{equation}

Applying this to $(\alpha,\beta)$, where
\begin{align}
    \alpha &= [\mu + \varepsilon_\mu - \nu - \varepsilon_\nu]_+, \\
    \beta  &= [\mu + \varepsilon_\mu - \nu - \varepsilon_\nu]_-, \nonumber
\end{align}
yields the following inequality:
\begin{align}
    &W_p([\mu{+}\varepsilon_\mu{-}\nu{-}\varepsilon_\nu]_+, [\mu{+}\varepsilon_\mu{-}\nu{-}\varepsilon_\nu]_-)
    \\&\quad\le 
    D^{1-\frac1p}\big(W_1([\mu{+}\varepsilon_\mu{-}\nu{-}\varepsilon_\nu]_+, [\mu{+}\varepsilon_\mu{-}\nu{-}\varepsilon_\nu]_-)\big)^{\frac1p}.     \nonumber
\end{align}
Using Proposition~\ref{prop: Decomposition via KR duality}, we conclude
\begin{align}
   & D^{1-\frac1p}\big(W_1([\mu{+}\varepsilon_\mu{-}\nu{-}\varepsilon_\nu]_+, 
        [\mu{+}\varepsilon_\mu{-}\nu{-}\varepsilon_\nu]_-)\big)^{\frac1p} \\
   &\le D^{1-\frac1p}\big(W_1(\mu,\nu)+W_1\big((\varepsilon_\mu-\varepsilon_\nu)_+,
        (\varepsilon_\mu-\varepsilon_\nu)_-\big)\big)^{\frac1p}. \nonumber
\end{align}
Since both $\varepsilon_\mu$ and $\varepsilon_\nu$ are normally distributed, we can say that $\varepsilon^*:=\varepsilon_\mu-\varepsilon_\nu$ is also normally distributed, with $\operatorname{cov}(\varepsilon^*) = 2 \operatorname{cov}(\varepsilon_\mu)$. Thus,
\begin{align}
    &D^{1-\frac1p}\big(W_1(\mu,\nu)+W_1((\varepsilon_\mu-\varepsilon_\nu)_+,(\varepsilon_\mu-\varepsilon_\nu)_-)\big)^{\frac1p} \nonumber
    \\
    &\qquad\le D^{1-\frac1p}\big(W_1(\mu,\nu)+W_1(\varepsilon^*_+,\varepsilon^*_-)\big)^{\frac1p}.
\end{align}
\end{proof}

\subsection{Proof of Theorems}
\label{sec: ProofThms}
\paragraph{Theorem ~\ref{thm: noiseImpact}}
Consider two $n \times n$ images $\mu$ and $\nu$.
  Assume that $\varepsilon_\mu,\varepsilon_\nu$ are $\mathcal{N}(0_{n^2}, \sigma^2I_{n^2})$. Recall the definition of $\bar S_{\mu_\varepsilon, \nu_\varepsilon}, \bar T_{\mu_\varepsilon, \nu_\varepsilon}$ in \eqref{eq: MeasSwitchResc}. Then, 
\begin{align}
    & W_1
    (
        \bar S_{\mu_\varepsilon, \nu_\varepsilon}
        ,
        \bar T_{\mu_\varepsilon, \nu_\varepsilon}
    )
    \\
    &=
    \frac
    {
        \sup_{f \in \operatorname{Lip}_1}
        \big\langle
            f
            ,
            S_{\mu_\varepsilon, \nu_\varepsilon}
            -
            T_{\mu_\varepsilon, \nu_\varepsilon}
            \left(
                1
                +
                \Oh_p
                \big(
                    \tfrac{1}{n}
                \big)
            \right)
        \big\rangle
    }
    {
        \sum_{x\in G_n}
        S_{\mu_\varepsilon, \nu_\varepsilon}(x)
    }. \nonumber
\end{align}

\begin{proof}[Proof of \cref{thm: noiseImpact}]
    First let us remark that 
    \begin{align}
        \sum_{x \in G_n} &S_{\mu_\varepsilon,\nu_\varepsilon}(x) - T_{\mu_\varepsilon,\nu_\varepsilon}(x)
        \\
        &=   \sum_{x \in G_n} \mu(x) + \varepsilon_\mu(x) -  \nu(x) - \varepsilon_\nu(x) \nonumber
        \\
        &= 0 + \sum_{x \in G_n}  \varepsilon_\mu(x) - \varepsilon_\nu(x), \nonumber
    \end{align}
    as 
    \begin{align}\sum_{x \in G_n} \mu_+(x) + \nu_-(x) = \sum_{x \in G_n}\nu_+(x) + \mu_-(x),  
    \end{align} and thus 
    \begin{align}\sum_{x \in G_n} \mu(x)  -  \nu(x) =0.
    \end{align}
   Remark that under our assumptions, for $N=n\cdot n$
   \begin{align}
    \sum_{x \in G_n}  \varepsilon_\mu(x) - \varepsilon_\nu(x) \sim \mathcal{N}( 0, 2 \sigma^2 N).
   \end{align}
    Because of this, 
    one has that 
    \begin{align}
    \sum_{x \in G_n} &S_{\mu_\varepsilon,\nu_\varepsilon}(x) =   
    \sum_{x \in G_n} T_{\mu_\varepsilon,\nu_\varepsilon}(x) \left( 1 + \tfrac{\Oh_p\left( \sigma n  \right)}{\sum_{x'} T_{\mu_\varepsilon,\nu_\varepsilon}(x') }\right).
    \end{align} 
    Owing to our assumption on the signals, notice that 
    \begin{align}
    \sum_{x \in G_n} \E T_{\mu_\varepsilon,\nu_\varepsilon}(x) \geq n^2 \sigma/ \sqrt{2\pi}.
    \end{align}
 Therefore,    

\begin{align} 
\label{eq: W1Dual2}
    & W_1 ( \bar S_{\mu_\varepsilon, \nu_\varepsilon}, \bar T_{\mu_\varepsilon, \nu_\varepsilon})
    =
    \sup_{f \in \operatorname{Lip}_1} \langle \bar S_{\mu_\varepsilon, \nu_\varepsilon}- \bar T_{\mu_\varepsilon, \nu_\varepsilon}, f \rangle  \\
    &= \sup_{f \in \operatorname{Lip}_1} \Big\langle \tfrac{ S_{\mu_\varepsilon, \nu_\varepsilon}}{\sum_{x\in G_n} S_{\mu_\varepsilon, \nu_\varepsilon}(x) } - \tfrac{ T_{\mu_\varepsilon, \nu_\varepsilon}}{\sum_{x\in G_n} T_{\mu_\varepsilon, \nu_\varepsilon}(x) }, f \Big\rangle \nonumber \\
    & =\tfrac1{ \sum_{x} S_{\mu_\varepsilon, \nu_\varepsilon}(x )} \sup_{f \in \operatorname{Lip}_1} \Big\langle  S_{\mu_\varepsilon, \nu_\varepsilon} \nonumber \\
    &\qquad \qquad -  T_{\mu_\varepsilon, \nu_\varepsilon} \left(1 + \tfrac{\Oh_p\left( \sigma n  \right)}{\sum_{x \in G_n} T_{\mu_\varepsilon,\nu_\varepsilon}(x)}\right) , f \Big\rangle. \nonumber
\end{align}
\end{proof}

\paragraph{Theorem ~\ref{thm: W1 between a measure and its noisy counterpart}}
Let $\mu : G_n \to [0,1]$ be a probability measure on the $n\times n$ unit grid $G_n$ with cyclic boundary conditions. Let $\varepsilon_1, \varepsilon_2$ be independent random signed measures on the grid that satisfy Assumption~\ref{assum: Noise}. Then
\begin{equation}
    \frac{n \sigma}{\sqrt{ \pi}}
    \le
    \mathbb{E} W^{\pm}_1(\mu+\varepsilon_1,\mu+\varepsilon_2)
    \le \frac{2\sqrt{2} n \log_2 n}{\sqrt{\pi}} \sigma  + \frac{n}{\sqrt{2\pi}} \sigma .
\end{equation}

\begin{proof}[Proof of \cref{thm: W1 between a measure and its noisy counterpart}]
By the Kantorovich--Rubinstein duality,
\begin{equation}
    W^{\pm}_1(\mu,\mu+\varepsilon)
    = \sup_{f\in \mathrm{Lip}_1} \langle f,\varepsilon\rangle
    = W_1(\varepsilon_+,\varepsilon_-).
\end{equation}

\begin{align}
    W^{\pm}_1(\mu+\varepsilon_1,\mu+\varepsilon_2)
    &= \sup_{f\in \mathrm{Lip}_1} \langle f,\varepsilon_1-\varepsilon_2\rangle
    \\&= W_1((\varepsilon_1-\varepsilon_2)_+,(\varepsilon_1-\varepsilon_2)_-). \nonumber
\end{align}

The first equality is the signed dual form with $\mu + \varepsilon_1-(\mu+\varepsilon_2)=\varepsilon_1-\varepsilon_2$. For simplicity, one can define $\varepsilon^*=\varepsilon_1-\varepsilon_2$ such that $\E[\lvert \varepsilon^*\lvert]=\sqrt2 \E[\lvert\varepsilon_1\rvert]$ as a sum of normally distributed random variables. Then, for the second equality, $\int\varepsilon^*=0$ implies $\varepsilon^*=\varepsilon^*_+ -\varepsilon^*_-$ with equal masses, so the balanced duality gives $W_1(\varepsilon^*_+,\varepsilon^*_-)=\sup_{f\in Lip_1}\langle f,\varepsilon^* \rangle$

\paragraph{Proof of the upper bound.}

Let $m=\varepsilon^*_+(G_n)=\varepsilon^*_-(G_n)$. By homogeneity of $W_1$,
\begin{equation}
    W_1(\varepsilon^*_+,\varepsilon^*_-) = m\,W_1\Big(\tfrac{\varepsilon^*_+}{m},\tfrac{\varepsilon^*_-}{m}\Big).
\end{equation}

Apply Proposition~\ref{prop: Multiscale} to the probability measures $\varepsilon_+/m$ and $\varepsilon_-/m$. There exists an integer $k^*$ with $   k^* =  \log_2 n $ such that
\begin{align}
    W_1
    \Big(
        \tfrac{\varepsilon^*_+}{m}
        ,
        \tfrac{\varepsilon^*_-}{m}
    \Big) 
    &\le
    \frac{\sqrt2}{2}\,2^{-k^*}
    \\
    &+\frac{\sqrt2}{2}
    \sum_{k=0}^{k^*}
    2^{-(k-1)} 
    \sum_{Q\in\mathcal{D}_k}
    \left|
        \Big(
            \tfrac{\varepsilon^*_+}{m}
            -
            \tfrac{\varepsilon^*_-}{m}
        \Big)
        (Q)
    \right|. \nonumber
\end{align}
Multiplying by $m$ gives
\begin{align}
    W_1(\varepsilon^*_+,\varepsilon^*_-) 
    \le& \frac{\sqrt2}{2}\,m\,2^{-k^*} + 
    \\&\frac{\sqrt2}{2} \sum_{k=0}^{k^*} 2^{-(k-1)} \sum_{Q\in\mathcal{D}_k} \Big| \sum_{x\in Q} \varepsilon^*(x) \Big|. \nonumber
\end{align}

Taking expectations and using independence and zero mean of the noise,
\begin{align}
    \mathbb{E} W_1(\varepsilon^*_+,\varepsilon^*_-)
    \le& \frac{\sqrt2}{2}\,2^{-k^*}\,\mathbb{E} m 
    \\&+ \frac{\sqrt2}{2} \sum_{k=1}^{k^*} 2^{-(k-1)} \sum_{Q\in\mathcal{D}_k} \mathbb{E}\Big| \sum_{x\in Q} \varepsilon^*(x) \Big|. \nonumber
\end{align}

Since each $\varepsilon^*(x)$ is Gaussian with variance $2\sigma^2$, one has $\mathbb{E}|\sum_{x\in Q}\varepsilon^*(x)| \le \sqrt2\sigma \sqrt{|Q|}\sqrt{2/\pi}$ and $\mathbb{E} m = \sum_{x\in G_n} \mathbb{E}(\varepsilon^*(x))_+ = n^2 \sqrt2\sigma/\sqrt{2\pi}$. Furthermore, the dyadic family $\mathcal{D}_k$ has $|\mathcal{D}_k|=2^{2k}$ cubes of cardinality $|Q|=n^2/2^{2k}$. Therefore
\begin{align}
    \sum_{Q\in\mathcal{D}_k} \mathbb{E}\Big| \sum_{x\in Q} \varepsilon^*(x) \Big|
    &\le \sqrt{2} \sigma \sqrt{\tfrac{2}{\pi}} \sum_{Q\in\mathcal{D}_k} \sqrt{|Q|}
    \\&= \sqrt{2}\sigma \sqrt{\tfrac{2}{\pi}} \cdot 2^{2k} \cdot \frac{n}{2^{k}}
    = 2\sigma \sqrt{\tfrac{1}{\pi}}\, n\, 2^{k}. \nonumber
\end{align}
Plugging this into the multiscale sum yields
\begin{align}
    \frac{\sqrt2}{2} \sum_{k=0}^{k^*} 2^{-(k-1)} \sum_{Q\in\mathcal{D}_k} &\mathbb{E}\Big| \sum_{x\in Q} \varepsilon^*(x) \Big| \nonumber
    \le \frac{\sqrt2}{2} 2 \sigma \sqrt{\tfrac{1}{\pi}}\, n \sum_{k=1}^{k^*} 2
    \\&\le 2 \sqrt2 \sigma \sqrt{\tfrac{1}{\pi}}\, n k^*.
\end{align}
With $k^*= \log_2 n$ this gives the $\sigma n \log_2 n$ contribution.

For the coarse term choose $k^*$ so that $2^{-k^*} = 1/n$. Then
\begin{equation}
    \frac{\sqrt2}{2}\,2^{-k^*}\,\mathbb{E} m 
    = \frac{\sqrt2}{2} \frac{1}{n}\cdot \frac{n^2 \sqrt2\sigma}{\sqrt{2\pi}} 
    = \frac{\sigma n}{\sqrt{2\pi}},
\end{equation}
which is the $\sigma n$ contribution.

Collecting the two contributions and absorbing absolute constants into the displayed coefficients yields
\begin{equation}\label{eq: bound on the expectation of W1 between noises}
    \mathbb{E} W_1(\varepsilon^*_+,\varepsilon^*_-)
    \le \frac{2\sqrt2}{\sqrt{\pi}} n \log_2 n \sigma+ \frac{1}{\sqrt{2\pi}} n \sigma.
\end{equation}

In this derivation the factor $m$ appears only in the coarse term and contributes to the $\sigma n$ piece after expectation. In the oscillation terms it cancels with the normalization, so no additional dependence on $m$ remains. There is no additive grid term independent of $\sigma$, hence no $1/(\sqrt{2}n)$ tail.

\paragraph{Proof of the lower bound.}

Let $f: G_n \to \mathbb{R}$ be the following,
\begin{align}
    f(x)
    :=
    \begin{cases}
        -\tfrac{1}{2n} & \text{ if } \varepsilon(x) < 0, \\ 
        +\tfrac{1}{2n} & \text{ if } \varepsilon(x) \ge 0. 
    \end{cases}
\end{align}
Since the distance between neighboring pixels is $1/n$ it follows that $f$ is 1-Lipschitz.
Therefore, by the Kantorovich--Rubinstein duality,
\begin{align}
    W_1(\mu,\mu+\varepsilon)
    =
    W_1(\varepsilon_+, \varepsilon_-)
    \ge
    \langle
        f,\varepsilon_+
        -
        \varepsilon_-
    \rangle
\end{align}
Taking expectations on both sides and using the symmetry of $\varepsilon(x)$, we have
\begin{align}
    \mathbb{E} W_1(\mu,\mu+\varepsilon)
    \ge
    \mathbb{E}
    \langle
        f, \varepsilon_+
    \rangle
    -
    \mathbb{E}
    \langle
        f,\varepsilon_-
    \rangle
    =
    2\mathbb{E}
    \langle
        f, \varepsilon_+
    \rangle.
\end{align}
Recall that the marginal distribution $\varepsilon(x)$ is $\mathcal{N}(0, \sigma^2)$, and therefore conditioned on $\varepsilon_+(x) > 0$, we have $\mathbb{E}\varepsilon_+(x) = \sigma \sqrt{2/\pi}$ since that is the expectation of the half-normal distribution with variance $\sigma^2$.
In expectation, $\langle f, \varepsilon_+ \rangle$ is a sum over $n^2/2$ pixels and its expectation satisfies
\begin{align}
    2\mathbb{E} \langle f, \varepsilon_+ \rangle
    &=
    2 \mathbb{E} \Bigg[ \sum_{x \text{ s.t. } \varepsilon(x) > 0} f(x) \varepsilon_+(x) \Bigg]
    \\
    &=
    2 \frac{n^2}{2} \cdot \mathbb{E} \left[ f(x) \varepsilon_+(x) \ |\  \varepsilon_+(x) > 0 \right] \nonumber
    \\
    &=
    n^2 \cdot \frac{1}{2n} \sqrt{\frac{2}{\pi}} \sigma
    = \frac{n \sigma}{\sqrt{2 \pi}}. \nonumber
\end{align}
Now, $ W^{\pm}_1(\mu+\varepsilon_1,\mu+\varepsilon_2) = W^{\pm}_1(\mu,\mu+\varepsilon_2-\varepsilon_1) $ but $\varepsilon_2-\varepsilon_1$ is just a zero-mean noise vector that satisfies Assumption \ref{assum: Noise} but with double variance.
It follows that
\begin{align}
    \mathbb{E} W^{\pm}_1(\mu+\varepsilon_1,\mu+\varepsilon_2)
    \ge
    \sqrt{2} \frac{n \sigma}{\sqrt{2\pi}}
    =
    \frac{n \sigma}{\sqrt{\pi}}.  
\end{align}
\end{proof}

\paragraph{\textbf{Theorem ~\ref{thm: WavBound1Pic}}}
    Let $\mu:G_n \to [0,1]$ be a probability measure on the $n \times n $ unit grid $G_n$. Let $\varepsilon_1, \varepsilon_2$ be independent random signed measures on the grid that satisfy Assumption~\ref{assum: Noise}. For convenience, we again assume that $n=2^\eta$, for $\eta \in \mathbb{N}$.
    Then, for $p>1$ with $p \in \mathbb{N}$,
    \begin{align}
       \E \left[ \left( W_p^\pm( \mu+\varepsilon_1, \mu + \varepsilon_2  )\right)^p \right]
       \le  
        \frac{4\sqrt{2}}{\sqrt{\pi}} n\sigma .
    \end{align}
    Therefore, by Jensen's inequality, 
    \begin{align}
    \E \left[  W_p^\pm( \mu+\varepsilon_1, \mu+\varepsilon_2 ) \right]
       \le  
        \bigg(
            \frac{4\sqrt{2}}{\sqrt{\pi}}
            n\sigma
        \bigg)^{1/p}.
    \end{align}

\begin{proof}[Proof of \cref{thm: WavBound1Pic}]
Building on  Proposition~\ref{prop: additive noise wasserstein} and similarly to the proof of Theorem \ref{thm: W1 between a measure and its noisy counterpart}, we only need to upper bound 
$W_p(\varepsilon^*_+, \varepsilon^*_-)$ where $\varepsilon^*=\varepsilon_1-\varepsilon_2$. 
By the assumption that the noise has total zero mass, this quantity is well defined. 

 Then, by the multiscale bound of Proposition~\ref{prop: Multiscale}
\begin{align} 
\label{eq: TransportNoise}
    & W_p^p(\varepsilon^*_+, \varepsilon^*_-) \\
    &\quad =  \varepsilon^*_+( G_n)  W_p^p \bigg(\frac{\varepsilon^*_+}{\varepsilon^*_+( G_n)}, \frac{\varepsilon^*_-}{\varepsilon^*_+( G_n)}\bigg) \nonumber \\
    &\quad \le  2^{-pk^* -p/2} \varepsilon^*_+( G_n) \nonumber \\
    &\qquad + 2^{-p/2} \sum_{k=1}^{k^*} 2^{-p(k-1)} \sum _{Q_i^k\in \mathcal{Q}^k} \lvert \varepsilon^*_+(Q_i^k)-\varepsilon^*_-(Q_i^k)  \rvert \nonumber \\
    &\quad \le   2^{-pk^*-p/2} \varepsilon^*_+( G_n) \nonumber \\
    &\qquad + 2^{-p/2} \sum_{k=1}^{k^*} 2^{-p(k-1)} \sum _{Q_i^k\in \mathcal{Q}^k} \lvert \varepsilon^*(Q_i^k)  \rvert. \nonumber
\end{align}
 Now, the proof is extremely similar to the previous one and by the same argument, 
 \begin{equation}
     \mathbb E\sum_{Q\in\mathcal Q_k}|\varepsilon^*(Q)|
     \le 4^k \frac{2}{\sqrt{\pi}}  2^{\eta-k} \sigma.
 \end{equation}

 As in the previous proof, 
 \begin{align}
     \mathbb{E} \varepsilon^*_+(G_n)
     =
    \frac{n^2}{\sqrt{\pi}}\sigma.
 \end{align}
 Altogether, 
\begin{align}
    &\mathbb{E} W_p^p(\varepsilon^*_+, \varepsilon^*_-)
    \\
    &\le 2^{-pk^*-p/2} \frac{4^{\eta}}{\sqrt{\pi}}\sigma + 2^\eta 2^{p/2} \sum_{k=1}^{k^*} 2^{-(p-1)k} \frac{2}{\sqrt{\pi}} \sigma \nonumber \\
    &\le 2^{-pk^*-p/2} \frac{4^{\eta}}{\sqrt{\pi}}\sigma + 2^\eta 2^{p/2} \frac{2}{\sqrt{\pi}} \sigma \left( \frac{1- 2^{-(p-1)k^*}}{2^{p-1}-1} \right). \nonumber
\end{align}
 We take \(k^* = \eta\) again to get
\begin{align}
    \mathbb{E} & W_p^p(\varepsilon^*_+, \varepsilon^*_-)  \\
    &\le 2^{-(p-1)\eta -p/2} \frac{2^{\eta}}{\sqrt{\pi}}\sigma + \frac{2^{\eta} 2^{p/2 + 1}}{\sqrt{\pi}}
    \sigma \left( \frac{1}{2^{p-1}-1} \right) \nonumber \\
    &\le \frac{2^\eta \sigma}{\sqrt{\pi}} \left( 2^{-(p-1)\eta -p/2} + \frac{2^{(p+2)/2}}{2^{p-1}-1} \right). \nonumber
\end{align}
Remark that $2^{-(p-1)\eta -p/2} \le \sqrt{2}/2$ and that $\frac{2^{(p+2)/2}}{2^{p-1}-1}$ is decreasing with value $4$ at 2. Thus the expression is bounded by $4 + \sqrt{2}/2\le 4\sqrt2$ and the claim follows.
\end{proof}

\paragraph{Lower bound for $W_p$ between noisy counterparts.}
Let $\mu:G_n \to [0,1]$ be a non-negative image on the $n \times n$ unit grid $G_n$, and let $\varepsilon_1, \varepsilon_2$ be independent random signed noise measures satisfying Assumption~\ref{assum: Noise}. For any integer $p \ge 1$, the expected signed $p$-Wasserstein discrepancy between the two noisy versions of the image is bounded from below by:
\begin{equation}
    \E \left[ W_p^\pm(\mu+\varepsilon_1, \mu+\varepsilon_2) \right] 
    \ge C_p^{} \, n^{\frac{2}{p}-1} \sigma^{\frac{1}{p}}.
\end{equation}
Where $C_p=\frac{2^{\frac{1}{p}-1}}{\sqrt{\pi}} \Gamma\left(\frac{1}{2p} + \frac{1}{2}\right)$.\newline
By definition, the signed Wasserstein distance is given by 
\begin{align*}
    &W_p^\pm(\mu+\varepsilon_1, \mu+\varepsilon_2) \\
    &= W_p((\mu+\varepsilon_1)_+ + (\mu+\varepsilon_2)_-, (\mu+\varepsilon_2)_+ + (\mu+\varepsilon_1)_-).
\end{align*} 
Let $\varepsilon^* = \varepsilon_2 - \varepsilon_1$. At every pixel $x \in G_n$, the difference between the target and source mass is identically 
\begin{align*}
    &(\mu+\varepsilon_2)_+(x) + (\mu+\varepsilon_1)_-(x) \\
    &\qquad- ((\mu+\varepsilon_1)_+(x) + (\mu+\varepsilon_2)_-(x))\\ 
    &= \mu(x) + \varepsilon_2(x) - (\mu(x) + \varepsilon_1(x)) 
    = \varepsilon^*(x)
\end{align*}
Let $M$ be the total excess mass that must be moved. Because $\sum_x \varepsilon_1(x) = 0$ and $\sum_x \varepsilon_2(x) = 0$ (as established in Proposition~\ref{prop: Noise model properties}), we have $\sum_x \varepsilon^*(x) = 0$, so the total excess equals the total deficit:
\begin{equation}
    M = \sum_{x \in G_n} (\varepsilon^*(x))_+.
\end{equation}
While the overlap mass, $$\min(((\mu+\varepsilon_2)_+ + (\mu+\varepsilon_1)_-)(x), ((\mu+\varepsilon_1)_+ + (\mu+\varepsilon_2)_-)(x))$$ can stay in place, any unit of the excess mass $M$ must be transported to a different pixel. Since the grid $G_n$ has a minimum inter-pixel distance of $1/n$, any mass that moves incurs a cost of at least $(1/n)^p$. Consequently, the total transport cost satisfies:
\begin{equation}
    W_p^p((\mu+\varepsilon_1)_+ + (\mu+\varepsilon_2)_-, (\mu+\varepsilon_2)_+ + (\mu+\varepsilon_1)_-) \ge \left(\tfrac{1}{n}\right)^p M.
\end{equation}
Taking the $p$-th root yields $W_p^\pm(\mu+\varepsilon_1, \mu+\varepsilon_2) \ge M^{1/p} / n$.
Because the function $t \mapsto t^{1/p}$ is concave for $p \ge 1$, we can apply the power mean inequality (or Jensen's inequality) to the sum of $m = n^2$ elements to yield:
\begin{equation}
    M^{\tfrac{1}{p}} 
    = \bigg( \sum_{i=1}^{m} (\varepsilon^*(x_i))_+ \bigg)^{\tfrac{1}{p}} 
    \ge m^{\frac{1}{p}-1} \sum_{i=1}^{m} (\varepsilon^*(x_i))_+^{\tfrac{1}{p}}.
\end{equation}
Taking the expectation and recalling that $\varepsilon_1$ and $\varepsilon_2$ are independent with marginal distributions $\mathcal{N}(0, \sigma^2)$, the difference $\varepsilon^*$ has a marginal distribution of $\mathcal{N}(0, 2\sigma^2)$. Therefore,
\begin{align}
    \E\left[ M^{1/p} \right] 
    &\ge (n^2)^{\frac{1}{p}-1} \sum_{i=1}^{n^2} \E\left[ (\varepsilon^*(x_i))_+^{1/p} \right] \\
    &= n^{\frac{2}{p}-2} \cdot n^2 \cdot \E\left[ (\varepsilon^*)_+^{1/p} \right]\\
    &= n^{\frac{2}{p}} \, \E\left[ (\varepsilon^*)_+^{1/p} \right].
\end{align}
The expectation of the fractional power of the half-normal distribution is given by the standard absolute moments of the Gaussian distribution. Specifically, for $\varepsilon^* \sim \mathcal{N}(0, 2\sigma^2)$:
\begin{align}
    \E\left[ (\varepsilon^*)_+^{1/p} \right] 
    &= \frac{1}{2} \E\left[ |\varepsilon^*|^{1/p} \right] \\
    &= (2\sigma^2)^{\frac{1}{2p}} \frac{2^{\frac{1}{2p}-1}}{\sqrt{\pi}} \Gamma\left(\frac{1}{2p} + \frac{1}{2}\right) 
    = C_p \sigma^{1/p}. \nonumber   
\end{align}
Combining these results gives the final bound:
\begin{equation}
    \E \left[ W_p^\pm(\mu+\varepsilon_1, \mu+\varepsilon_2) \right] \ge \frac{1}{n} \E\left[ M^{1/p} \right] \ge C_p n^{\frac{2}{p}-1} \sigma^{\frac{1}{p}}.
\end{equation}
\paragraph{On the asymptotic gap and spatial geometry of noise.}
While our lower bound captures the correct empirical $\sigma^{1/p}$ scaling, it leaves an asymptotic gap with respect to the grid resolution $n$. For $p=2$, the lower bound scales as $\Omega(\sigma^{1/2})$, whereas Theorem~\ref{thm: WavBound1Pic} establishes an upper bound of $O(n^{1/2}\sigma^{1/2})$. This gap arises because our construction penalizes the must-move mass $M = \Theta(n^2\sigma)$ by the minimum possible grid distance, $1/n$. For $p=2$, the transport cost $(1/n)^2$ perfectly cancels the $n^2$ mass growth. In reality, noise often clusters spatially, forcing mass to travel distances strictly larger than $1/n$. Capturing this multiscale spatial transport from below, especially since $W_p^\pm$ lacks the triangle inequality for $p>1$ remains an open challenge.

\paragraph{\cref{thm: WavBound}}
    Let $\mu, \nu:G_n \to [0,1]$ be two probability measures on the \(n \times n\) unit grid $G_n$ with cyclic boundary conditions and let
    \( \varepsilon_\mu, \varepsilon_\nu : G_n \to \mathbb{R} \) be signed noise measures that satisfy Assumption~\ref{assum: Noise}.
    For convenience we assume that $n=2^\eta$, for $\eta \in \mathbb{N}$.
    Then 
    \begin{align}
        \mathbb{E}
        &\left[
            W^\pm_1(\mu + \varepsilon_\mu,  \nu + \varepsilon_\nu) - W^\pm_1(\mu, \nu)
        \right]\\
        &\le
        \frac{4n \log_2 n + n }{\sqrt{\pi}} \sigma
        +
        \frac{\sqrt{2}}{n}.
    \end{align}

\begin{proof}[Proof of \cref{thm: WavBound}]
Recall that $W_1^\pm $ satisfies the triangle inequality, so
\begin{align} \label{ineq:W1_noisedmunu_triangle}
    W_1^\pm &(\mu + \varepsilon_\mu,  \nu + \varepsilon_\nu)
    \\
    &\le
    W_1^\pm(\mu + \varepsilon_\mu, \mu)
    +
    W_1^\pm(\mu, \nu)
    +
    W_1^\pm(\nu, \nu + \varepsilon_\nu). \nonumber
\end{align}
By symmetry 
\begin{align}
    \mathbb{E}W_1^\pm(\mu + \varepsilon_\mu, \mu) = \mathbb{E}W_1^\pm(\nu, \nu + \varepsilon_\nu)
\end{align}
Therefore,
\begin{align}
    \mathbb{E}[W_1^\pm(\mu + \varepsilon_\mu,  \nu + \varepsilon_\nu) &- W_1^\pm(\mu, \nu)]
    \\&\le
    2\mathbb{E}W_1^\pm(\mu,  \mu + \varepsilon_\mu). \nonumber
\end{align}
We proceed to upper-bound the RHS.
By the definition of the signed Wassertein metric,
\begin{align}\label{eq: W1 pm decomposition}
    W_1^\pm(\mu, \mu + \varepsilon)
    &=
    W_1(\mu_+ + (\mu + \varepsilon)_-, (\mu + \varepsilon)_+ + \mu_-) \nonumber
    \\
    &=
    W_1(\mu + (\mu + \varepsilon)_-, (\mu + \varepsilon)_+).
\end{align}
Where the last equality follows from the fact that $\mu_+=\mu$ and $\mu_- = 0$.
We now use the dyadic upper bound in~\eqref{eq:dyadic_upper_bound}.
The image is partitioned into 4 quadrants recursively, thus $\delta =1/2$.
Our domain has diameter $\sqrt{2}/2$ since it is the discrete $n \times n$ unit grid $G_n \subset [0,1] \times [0,1] \in \mathbb{R}^2$ with cyclic boundary conditions.
The inequality only holds for probability measures, so we need to rescale.
\begin{align}
    &W_1^\pm  (\mu, \mu_\varepsilon) \\
    &= (\mu + \varepsilon)_+ (G_n) \times W_1^\pm\bigg( \frac{\mu + (\mu + \varepsilon)_-}{(\mu + \varepsilon)_+ (G_n)}, \frac{(\mu + \varepsilon)_+}{(\mu + \varepsilon)_+ (G_n)}\bigg) \nonumber \\
    &\le \tfrac{\sqrt{2}}{2} \cdot 2^{-k^*}  (\mu + \varepsilon)_+ (G_n)
    + \tfrac{\sqrt{2}}{2} \sum_{k=1}^{k^*} 2^{-(k-1)} \nonumber \\
    &\times \sum _{Q_i^k\in \mathcal{Q}^k} \big\lvert (\mu + (\mu + \varepsilon)_-)(Q_i^k)-(\mu + \varepsilon)_+ (Q_i^k) \big\rvert. \nonumber
\end{align}
By considering the two cases $(\mu + \varepsilon)(Q_i^k) \ge 0$ and $(\mu + \varepsilon)(Q_i^k) < 0$ it is easy to see that the term $(\mu + (\mu + \varepsilon)_-)(Q_i^k)-(\mu + \varepsilon)_+ (Q_i^k)$ is equal to  $-\varepsilon(Q_i^k)$, so the bound above simplifies to
\begin{align} \label{eq:WeedBach_bound_simplified}
    W_1^\pm\left(\mu, \mu_\varepsilon\right)
    &\le
    2^{-k^* - \tfrac{1}{2}}  (\mu + \varepsilon)_+ (G_n)
    \\&+
    \frac{\sqrt{2}}{2}
    \sum_{k=1}^{k^*} 2^{-(k-1)} \sum _{Q_i^k\in \mathcal{Q}^k} \lvert \varepsilon(Q_i^k)  \rvert.
\end{align}

Recall that $Q_i^k$ is a square region of size $2^{\eta-k}\times 2^{\eta-k}$. Then, $\varepsilon(Q_i^k)$ is a sum of negatively correlated random variables so that 
\[
\operatorname{Var} \varepsilon(Q_i^k) \le \lvert Q_i^k\rvert \sigma^2.
\]
Recall that $\mathbb{E} \lvert X \rvert = \sigma \sqrt{2/\pi}$ when $X \sim \mathcal{N}(0, \sigma^2)$. 
Thus, 
\begin{equation}
    \mathbb E|\varepsilon(Q_i^k)|
    \le
    \sqrt{\frac{2}\pi} \sigma  2^{\eta-k} .
\end{equation}

Summing over the $4^k$ cells at level $k$,
\begin{equation}
    \mathbb E\sum_{Q\in\mathcal Q_k}|\varepsilon(Q)|
    \le
    4^k \sqrt{\frac{2}\pi} \sigma  2^{\eta-k} .
\end{equation}

Plugging this back into the RHS of \eqref{eq:WeedBach_bound_simplified} and recalling that \(2^\eta = n\) gives
\begin{align}
    \mathbb{E}
    &\bigg[
        \frac{\sqrt{2}}{2}
        \sum_{k=1}^{k^*} 2^{-(k-1)} 
        \sum_{Q_i^k\in \mathcal{Q}^k} \lvert \varepsilon(Q_i^k)  \rvert
    \bigg]
    \\
    &\le
    \frac{\sqrt{2}}{2}
    \sum_{k=1}^{k^*} 2^{-(k-1)}
    4^k \sqrt{\frac{2}\pi} \sigma  2^{\eta-k} \nonumber
   \\
    &=
    \frac{2^{\eta+1} \sigma}{\sqrt{\pi}}
    k^* . \nonumber
\end{align}
We take \(k^* = \eta = \log_2 n\) to obtain the bound
\begin{align} \label{ineq:EW1}
    \mathbb{E} W_1^\pm\left(\mu, \mu_\varepsilon\right)
    \le
    \frac{1}{\sqrt{2}n}
    \mathbb{E} \left[ (\mu + \varepsilon)_+ (G_n) \right]
    +
    \frac{2n \log_2 n }{\sqrt{\pi}} \sigma.
\end{align}
We now bound the first term in the RHS.
\begin{align}
\label{eq: boundMass}
    \mathbb{E} \left[ (\mu + \varepsilon)_+ (G_n) \right]
    &\le
    \mathbb{E}[\mu_+(G_n)] + \mathbb{E}[\varepsilon_+(G_n)]
    \\
    &=
    1 + \mathbb{E}[\varepsilon_+(G_n)] \nonumber
\end{align}
where the last equality follows from the fact that $\mu$ is a (non-negative) probability measure.
By a symmetry argument
\begin{align}
    \mathbb{E} \varepsilon_+(G_n)
    =
    \tfrac{1}{2} \mathbb{E} |\varepsilon|(G_n).
\end{align}
Set
$\mathfrak{s}=\frac12\sum_{x}|\varepsilon(x)|$ and recall that $\varepsilon(x)\sim\mathcal N\big(0, \sigma^2\big)$ to establish,
\begin{equation}
\mathbb E \mathfrak{s}=\frac{n^2}{2} \sqrt{\frac{2}{\pi}}\sigma.
\end{equation}

Thus,
\begin{align}
    \frac{1}{\sqrt{2} n} \mathbb{E} \left[ (\mu + \varepsilon)_+ (G_n) \right]
    \le
    \frac{1}{\sqrt{2} n}
    +
    \frac{ \sigma }{2\sqrt{\pi}} n.
\end{align}
Plugging this back into \eqref{ineq:EW1} gives
\begin{align}
    \mathbb{E} W_1^\pm \left( \mu, \mu_\varepsilon \right)
    \le
    \frac{2n \log_2 n + n/2 }{\sqrt{\pi}} \sigma
    +
    \frac{1}{\sqrt{2} n}.
\end{align}
Note that the same bound applies to \( \mathbb{E} W_1^\pm \left( \nu, \nu + \varepsilon_\nu \right) \).
By subtracting \( W_1^\pm(\mu, \nu) \) from both sides of \eqref{ineq:W1_noisedmunu_triangle} and taking expectations, we have
\begin{align}
    \mathbb{E} &\left[ W_1^\pm(\mu_\varepsilon,  \nu_\varepsilon) - W_1^\pm(\mu, \nu) \right]
    \\
    &\qquad\le
    \mathbb{E} W_1^\pm(\mu_\varepsilon, \mu)
    +
    \mathbb{E} W_1^\pm(\nu, \nu_\varepsilon) \nonumber
    \\
    &\qquad\le
    \frac{4n \log_2 n + n }{\sqrt{\pi}} \sigma
    +
    \frac{\sqrt{2}}{n}. \qedhere
\end{align}
\end{proof}

\paragraph{Theorem ~\ref{thm: Wp bound between noisy images}}
    Let $\mu, \nu:G_n \to [0,1]$ be two probability measures on the \(n \times n\) unit grid $G_n$ with cyclic boundary conditions and let
    \( \varepsilon_\mu, \varepsilon_\nu : G_n \to \mathbb{R} \) be signed noise measures that satisfy Assumption~\ref{assum: Noise}.
    For convenience we assume that $n=2^\eta$, for $\eta \in \mathbb{N}$.
    Then 
    \begin{align}
        &\mathbb{E}\big[W_p^\pm(\mu+\varepsilon_\mu,\nu+\varepsilon_\nu)\big]
        \\
        &\le
        \left(\tfrac{\sqrt2}{2}\right)^{1-\frac1p} W_1(\mu,\nu)^{\tfrac1p} 
        + 
        \tfrac{\sqrt2}{2}\bigg(\tfrac{4}{\sqrt{\pi}} n\log_2 n +
        \tfrac{1}{\sqrt\pi} n 
        \bigg)^{\tfrac1p}\sigma^{\tfrac1p}. \nonumber
    \end{align}

\begin{proof}[Proof of \cref{thm: Wp bound between noisy images}]\label{proof: Distance between two noisy images upper bound}

Using Proposition ~\ref{prop: Wp to W1 on noisy images}
\begin{align}
    \mathbb{E}\big[W_p^\pm&(\mu+\varepsilon_\mu,\nu+\varepsilon_\nu)\big]
    \\&\le
    \mathbb{E}\big[D^{1-\frac1p}\big( W_1(\mu,\nu) + W_1(\varepsilon^*_+,\varepsilon^*_-) \big)^{\frac1p}\big] \nonumber
\end{align}
The function $t\mapsto t^{1/p}$ is concave on $[0,\infty)$, thus
\begin{align}
    &\mathbb{E}\big[D^{1-\frac1p}\big( W_1(\mu,\nu) + W_1(\varepsilon^*_+,\varepsilon^*_-) \big)^{\frac1p}\big]
    \\&\qquad\le
    D^{1-\frac1p}\big(\mathbb{E}\big[W_1(\mu,\nu)\ + 
    W_1(\varepsilon^*_+,\varepsilon^*_-) \big]\big)^{\frac1p}. \nonumber
\end{align}
By the linearity of expectation,
\begin{align}
    &D^{1-\frac1p}\big(\mathbb{E}\big[W_1(\mu,\nu)\ + 
    W_1(\varepsilon^*_+,\varepsilon^*_-) \big]\big)^{\frac1p}
    \\&\qquad=
    D^{1-\frac1p}\big(W_1(\mu,\nu)\ + 
    \mathbb{E}\big[W_1(\varepsilon^*_+,\varepsilon^*_-) \big]\big)^{\frac1p} \nonumber
\end{align}
Finally, using Theorem ~\ref{thm: W1 between a measure and its noisy counterpart} we get that
\begin{align}
    &D^{1-\frac1p}\big(W_1(\mu,\nu)\ + 
    \mathbb{E}\big[W_1(\varepsilon^*_+,\varepsilon^*_-) \big]\big)^{\frac1p}
    \\&\quad\le
    D^{1-\frac1p}\bigg(W_1(\mu,\nu)\ + \frac{2\sqrt2}{\sqrt{\pi}}\sigma n\log_2 n+\frac1{\sqrt{2\pi}}\sigma n
    \bigg)^{\frac1p}. \nonumber
\end{align}
Using Jensen, 
\begin{align}
    \mathbb{E}\big[W_p^\pm(\mu+\varepsilon_\mu,\nu+\varepsilon_\nu)\big]
    \le
    (\tfrac{\sqrt2}{2})^{1-\frac1p} W_1(\mu,\nu)^{\frac1p} \nonumber
    \\
    + 
    \tfrac{\sqrt2}{2}\bigg(\tfrac{4}{\sqrt{\pi}} n\log_2 n +
    \tfrac{1}{\sqrt\pi} n 
    \bigg)^{\frac1p}\sigma^{\frac1p}.
\end{align}

\end{proof}

\section{Unbalanced optimal transport}
\label{sec: Unbalanced}

Various approaches have been proposed to generalize the idea of optimal transport to the case of two measures whose total mass is not equal.
See \citet{CaffarelliMcCann2010,LieroMielkeSavare2018, Figalli2010}, for instance.
Among these proposals, one is particularly amenable to the analysis we carried out.
Given $\mu, \nu \in \mathcal{M}_+(X) $ two positive measures on a set $X$ that do not necessarily have the same mass, the set of \emph{subcouplings} of $\mu$ and $\nu$ is defined as 
\begin{align}
    \Gamma_{\le}(\mu, \nu) &:=\{ \pi \in \mathcal{M}_+(X\times X) : \pi ( A \times X) 
    \\&\le \mu(A), \pi(X \times B) \nonumber
    \\&\le \nu (B) , \text{ for all } A,B \in \mathcal{B}(X)    \}, \nonumber
\end{align}

where $\mathcal{B}(X)$ is the set of Borel measures on $X$. For simplicity, set  $m_\mu := \mu(X)$, $m_\nu:=\nu(X)$ and $m_\pi := \pi(X\times X)$.
Then, the $(p,C)$ unbalanced Kantorovich--Rubinstein distance is defined by 
\begin{align}
\label{eq: KRC}
\operatorname{KR}_{p,C}(\mu, \nu)
&\coloneqq \bigg( \inf_{\pi \in \Gamma_{\le}(\mu, \nu)} \bigg[ \int_{X\times X} d^p(x,y) \, \mathrm{d} \pi(x,y) \nonumber 
\\
&+ 
C^p \left( \tfrac{m_\mu+m_\nu}{2}-m_\pi\right) \bigg] \bigg)^{\frac{1}{p}}
\end{align}
The parameter $C$ determines the range of admissible transport. Indeed, any subcoupling transferring mass between points that are further apart than $C$ cannot be optimal, as destroying the mass would lead to a smaller objective function.

\begin{prop}
\label{prop: KR}
Consider a square $2^\eta\times 2^\eta$ grid, where $\eta \ge 0$ is integer, and a dyadic partition scheme. Let $\mu, \nu$ be two measures on the grid, not necessarily with equal masses. 
It holds that,
\begin{align}
&\operatorname{KR}_{p,C}(\mu, \nu)  \le
        \frac{C^p}{2}
            \lvert m_\mu - m_\nu\rvert
            \\&\qquad+
           \diam(S)^p  2^{3p-1}
            \sum_{k=\ell^*}^{\eta}
            2^{-pk}
            \sum _{Q\in \mathcal{D}^k}
            \lvert
                \mu(Q)-\nu(Q)
            \rvert.    \nonumber
\end{align}
where
\(
\ell^*=1+ \min\left( L, \left\lfloor \max\big(0 , \log_2 \left( 2\diam(S)/C  \right)\big) \right\rfloor \right).
\)
\end{prop}
\begin{proof}
Looking at the objective in~\eqref{eq: KRC}, a strategy to construct a good subcoupling is to match as much mass below scale $C$ as possible and then just pay $C^p$ for the mass that hasn't been coupled. 
Because of the coarse-to-fine dyadic decomposition, each pixel is a final leaf of the decomposition tree. 

One can then apply Lemma~3.15 in \citet{StrulevaHundrieserSchuhmacherMunk2026} giving bounds on the distance on trees. 
\end{proof}

Similarly to the above, we can define 
\begin{align}
\operatorname{KR}_{p,C}^\pm(\mu, \nu) := \operatorname{KR}_{p,C}(\mu_++\nu_-, \nu_++\mu_-) .
\end{align}
\begin{thm}
\label{thm: KR between a measure and its noisy counterpart}
Let $\mu : G_n \to [0,1]$ be a probability measure on the $n\times n$ unit grid $G_n$ with cyclic boundary conditions, and let $\varepsilon$ satisfy Assumption~\ref{assum: Noise}. Further assume that $n = 2 ^\eta$, for $\eta \in \mathbb{N}$. Then, for $C>0$,
\begin{align}
\label{eq: KR_cases}
    &\mathbb{E} \operatorname{KR}_{p,C}^\pm(\mu+\varepsilon,\mu) \\
    &\le \begin{cases} 
          \diam(S)^p 2^{3p-1} \frac{\sqrt{2}\sigma}{\sqrt{\pi}} n (\eta-\ell^*+1) & \text{if } p=1, \\[1ex]
        \diam(S)^p 2^{3p-1} \frac{\sigma}{\sqrt{2\pi}}  n (2^{2-\ell^*} - 2^{1-\eta}) & \text{if } p=2.
    \end{cases} \nonumber
\end{align}
\end{thm}

\begin{proof}[Proof of \cref{thm: KR between a measure and its noisy counterpart}]

We apply Proposition~\ref{prop: KR} to the probability measures $\mu_-+(\mu+\varepsilon)_+$ as well as $(\mu+\varepsilon)_-+ \mu_+$.  
First, note that 
\begin{align}
    &\mathbb{E} \lvert m_{ \mu_-+(\mu + \varepsilon)_+} - m_{\mu_+ +  (\mu + \varepsilon)_-} \rvert
    \\
    &=\mathbb{E} \bigg\lvert \sum_{x} (\mu_- - \mu_+)(x) + (\mu + \varepsilon)_+(x) - (\mu + \varepsilon)_-(x)\bigg\rvert = 0. \nonumber
\end{align}
Taking expectations and using independence and zero mean of the noise,
\begin{align}
    &\sum_{k=\ell^*}^{\eta}
            2^{-pk}
            \sum _{Q\in \mathcal{D}^k}
            \lvert
                \mu_-(Q)+ (\mu + \varepsilon)_+(Q)
                \\&-(\mu + \varepsilon)_-(Q) - \mu_+(Q)
            \rvert  
            = \sum_{k=\ell^*}^{\eta}
            2^{-pk}
            \sum _{Q\in \mathcal{D}^k}
            \lvert
                 \varepsilon(Q)
            \rvert. \nonumber
\end{align}

Since each $\varepsilon(x)$ is Gaussian with variance $\sigma^2$, one has $\mathbb{E}|\sum_{x\in Q}\varepsilon(x)| \le \sigma \sqrt{|Q|}\sqrt{2/\pi}$. Furthermore, the dyadic family $\mathcal{D}_k$ has $|\mathcal{D}_k|=2^{2k}$ cubes of cardinality $|Q|=n^2/2^{2k}$. Therefore
\begin{align}
    &\sum_{Q\in\mathcal{D}_k} \mathbb{E}\Big| \sum_{x\in Q} \varepsilon(x) \Big|
    \le \sigma \sqrt{\tfrac{2}{\pi}} \sum_{Q\in\mathcal{D}_k} \sqrt{|Q|}
    \\&= \sigma \sqrt{\tfrac{2}{\pi}} \cdot 2^{2k} \cdot \frac{n}{2^{k}}
    = \sigma \sqrt{\tfrac{2}{\pi}}\, n\, 2^{k}. \nonumber
\end{align}
Plugging this into the multiscale sum yields
\begin{equation}
 \sum_{k=\ell^*}^{\eta} 2^{-pk} \sum_{Q\in\mathcal{D}_k} \mathbb{E}\Big| \sum_{x\in Q} \varepsilon(x) \Big|
    \le  \sigma \sqrt{\tfrac{2}{\pi}}\, n \sum_{k=\ell^*}^{\eta} 2^{(1-p)k}. \qedhere
\end{equation}

\end{proof}
\end{document}